\renewcommand{\S}{\mathcal{S}}
\newcommand{\B}{\mathcal{B}}
\newcommand{\A}{\mathcal{A}}
\newtheorem{theorem}{Theorem}[section]
\newtheorem{corollary}[theorem]{Corollary}
\newtheorem{lemma}[theorem]{Lemma}
\theoremstyle{definition}
\newtheorem{example}[theorem]{Example}
\newtheorem{remark}[theorem]{Remark}
		\title{Pattern-restricted cyclic permutations with a pattern-restricted cycle form}
		\author[1]{Kassie Archer\thanks{Department of Mathematics, United States Naval Academy, Annapolis, MD, 21402, karcher@usna.edu}, Ethan Borsh\thanks{Mathematics Department, Allegheny College, Meadville, PA, 16335, borsh01@allegheny.edu}, Jensen Bridges\thanks{Department of Mathematics, University of Florida, Gainesville, FL, 32611, jensenbridges@ufl.edu}, Christina Graves\thanks{University of Texas at Tyler, Tyler, TX, 75799, cgraves@uttyler.edu}, and Millie Jeske\thanks{University of Texas at Tyler, Tyler, TX, 75799, mjeske3@patriots.uttyler.edu}}
		\date{}
\begin{document}

\maketitle

\begin{abstract}
	In this paper, we consider cyclic permutations that avoid the monotone decreasing permutation $k(k-1)\ldots 21$, whose cycle also demonstrates some pattern avoidance. If the cycle is written in standard form with 1 appearing at the beginning of the cycle and the standard form avoids a pattern of length 3, we find answers in terms of continued fraction generating functions. We also consider the case that every cyclic rotation of the cycle form of the permutation avoids a pattern of length 4 and enumerate two such cases.
\end{abstract}

\section{Introduction and background}

The symmetric group $\S_n$ consists of permutations (i.e., bijections) of the set $[n]=\{1,2,\ldots,n\}$. These permutations can be written either in their cycle form as a product of disjoint cycles or in one-line notation, as $\pi = \pi_1\pi_2\ldots\pi_n$ where $\pi_k=\pi(k)$ for any $k\in[n]$. We say a permutation $\pi\in\S_n$ is a \emph{cyclic} permutation if it is composed of a single $n$-cycle. In this paper, we denote by $C(\pi)$ the cycle notation of a cyclic permutation $\pi$, starting with the element 1; that is, $C(\pi) = (1,c_2,c_3,\ldots,c_n)$ where $c_{i+1} = \pi(c_i).$ For example, $C(41532) = (1,4,3,5,2)$.

Pattern avoidance is typically defined in terms of a permutation's one-line notation. We say that a permutation $\pi\in\S_n$ \emph{avoids} a pattern $\sigma\in\S_k$ if there is no subsequence of $\pi$ that appears in the same relative order as $\sigma.$ For example, the permutation $\pi=32185476$ avoids the pattern 231, but it does contain the pattern 123 since the subsequence $\pi_2\pi_5\pi_7=257$ is in the same relative order as 123.  Enumeration of pattern-avoiding permutations began in earnest in 1985 with \cite{SS85} and has been a topic of study since that time.

Pattern avoidance of the cycle form of a cyclic permutation has been considered previously in \cite{ABBGJ}, in which the authors consider cyclic permutations where the one-line notation of $\pi$ avoids one pattern in $\S_3$ and the cycle form $C(\pi)$ avoids another pattern in $\S_3.$ It has also been considered in \cite{AL24} in which both the one-line form of a permutation and its standard cycle form (i.e.~its image under the so-called fundamental bijection) avoids a pattern in $\S_3.$ Interestingly, the  avoidance of patterns in the cycle form of a permutation has recently been used in \cite{BT23} to characterize so-called shallow permutations, which are equivalent to unlinked permutations \cite{W22}.
Avoidance of the cycle form has also been considered when characterizing almost-increasing permutations in \cite{AL17,E11} and when counting occurrences of adjacent $q$-cycles in permutations in \cite{CU24}.
Another related topic is the study of circular pattern avoidance \cite{Callan,V03,DELMSSS}, in which all cyclic rotations of a permutation avoid a given pattern.  This topic becomes particular relevant in Section~\ref{sec:allcycles} of this paper.


In this paper, we consider cyclic permutations that avoid the decreasing pattern $\delta_k=k(k-1)\ldots 21$ in its one-line notation and another pattern in its cycle form $C(\pi)$ or in all cyclic rotations of $C(\pi).$ In Section~\ref{sec:standard}, we let $\A_{n}(\sigma; \tau)$ denote the set of permutations $\pi\in\S_n$ that so that the one-line notation avoids $\sigma$ and the cycle form $C(\pi)$ avoids $\tau$; we also denote $a_{n,k}(\tau) = |\A_{n}(\delta_k; \tau)|$. For example, $\pi = 5 3 1 2 6 9 4 7 8$ avoids 4321 with $C(\pi)= (1, 5, 6, 9, 8, 7, 4, 2, 3)$ avoiding 213, so $\pi\in\A_9(4321;213).$ In this paper, we find $\A_n(\delta_k;\tau)$ for $\tau\in\S_3\setminus\{321\}.$

In particular, in Theorem~\ref{thm:213-k21} in Section~\ref{sec:213}, we find that for $k\geq 4$, $f_k(z;213) = \sum_{n\geq0}a_{n,k}(213)z^n$ is given by the height $k$ continued fraction
\[
\setlength{\arraycolsep}{0pt}
\newcommand{\md}[2][1.45]{\mathbin{\raisebox{-#1ex}[0pt][0pt]{$\displaystyle#2$}}}
\md{f_{k}(z;213)={}}
\begin{array}[t]{ *{9}{>{\displaystyle{\mathstrut}}c<{{}}} }
	z \\
	\cline{1-3}
	\md{1} & \md{-}   &z \\
	\cline{3-5}
	&          & \md{1} & \md{-} & z \\
	\cline{5-7}
	&          &          &        & \md{1} & \md{-} & \md[3]{\ddots} \\
	&&&&&&& \md{-} & z \\
	\cline{9-9}
	&&&&&&&   & 1-z.
\end{array}
\]
\sloppypar{In Section~\ref{sec:312}, we show $f_{k}(z;312)=f_k(z;213).$ We also find in Theorem~\ref{thm:231} of Section~\ref{sec:231} that if $f_k(z;231)=\sum_{n\geq 0}a_{n,k}(231)z^n,$ we have a generating function that depends on the continued fraction for $f_k(z;213)$:}
\[f_k(z;231) = \frac{f_{k-1}(z;213)\left([f_{k-2}(z;213)]^2-f_{k-2}(z;213)+1\right)}{1-z}. \]

In Section~\ref{sec:allcycles}, we let $\A^\circ_{n}(\sigma; \tau)$ 
denote the set of permutations $\pi\in\S_n$ so that the one-line notation avoids $\sigma$ and \emph{all cyclic rotations} of the cycle form $C(\pi)$ avoid $\tau$; we denote
$a^{\circ}_{n,k}(\tau) = |\A^\circ_{n}(\delta_k; \tau)|$.
For example, $\pi =3 9 2 1 4 5 6 7 8$ avoids 4321 with all cyclic rotations of $C(\pi)= (1, 3, 2, 9, 8, 7, 6, 5, 4)$ avoiding 1342, so $\pi\in\A^\circ_9(4321;1342).$ Up to symmetry and cyclic rotation, there are three cases in $\S_4$ to consider, and we find the enumeration for two of these cases. In Theorem~\ref{thm:1324}, we find that 
$a_{n,3}^\circ(1324)=2^{n-2}$ and $a_{n,k}^\circ(1324)=F_{2n-3}$ for $k\geq 4,$ where $F_n$ is the $n$th Fibonacci number. In Theorem~\ref{thm:1342} we find that if $f^\circ_k(z;1342) = \sum_{n\geq 0}a^\circ_{n,k}(1342)z^n,$ then
\[f^\circ_k(z;1342) = \frac{1-3z+2z^2+z^3}{(1-z)^2(1-2z)} - \frac{2z^{k+1}}{(1-z)^{k-1}(1-2z)}.\]
We conclude the paper in Section~\ref{sec:open} with some conjectures, open questions, and directions for further research.

\section{Avoidance in the cycle form $C(\pi)$}\label{sec:standard}

In this section, we enumerate cyclic permutations so that $\pi$ avoids the pattern $\delta_k=k(k-1)\ldots 21$ and the cycle form $C(\pi)$ (i.e., the cycle form that begins with 1) avoids the pattern $\tau$ for some $\tau\in\S_3$. We solve this for all but one $\tau \in S_3$; the case when $\tau=321$ is left as an open question.

We first note that the case when $\tau=123$ or $\tau=132$ is trivial. Indeed, the only permutation $\pi\in\S_n$ so that $C(\pi)$ avoids 123 is $\pi=n12\ldots(n-1)$ with $C(\pi)=(1,n,n-1,\ldots,3,2)$ and so $a_{n,k}(123)=1$ for all $n\geq 1$ and $k\geq 3.$ Similarly, the only permutation $\pi\in\S_n$ so that $C(\pi)$ avoids 132 is $\pi=23\ldots n1$ with $C(\pi)=(1,2,3,\ldots,n)$ and so $a_{n,k}(132)=1$ for all $n\geq 1$ and $k\geq 3.$

We will next consider the case when $\tau=213$, followed by the case when $\tau=231$, both of which give interesting answers in terms of generating functions. 
\subsection{Enumerating $\A_{n}(\delta_k;213)$}\label{sec:213}

In this section, we consider cyclic permutations that avoid $\delta_k:=k(k-1)\cdots 21$ in one-line notation and avoid $213$ in cycle form.  Notice that if $\pi$ is a permutation avoiding $\delta_k$, then $\pi$ necessarily avoids $\delta_i$ for all $i < k$. Thus,  $a_{n,k}(213) \geq a_{n,k-1}(213)$ for $k \geq 2$. Furthermore, since cyclic permutations on $[n]$ avoiding 213 in cycle form are in one-to-one correspondence with permutations on $[n-1]$ avoiding 213, we have $a_{n,k}(213)$ is the $(n-1)$-st Catalan number for $k \geq n$.

As an example, consider $n=5$, and notice there are 14 cyclic permutations on 5 elements whose cycle form avoids 213.  All 14 of these permutations avoid $\delta_5=54321$ in one-line notation, and all but the permutation $54132$ avoid $\delta_4=4321$. There are eight cyclic permutations that avoid 213 in cycle form while avoiding $\delta_3=321$ in one-line notation, namely, 31452, 31524, 41253, 51234, 23451, 23514, 24153, and 25134.

Thus we have
\[
a_{5,k}(213) = \begin{cases} 0 & \text{if } k \leq 2\\
	8 & \text{if } k = 3\\
	13 & \text{if } k = 4\\
	14 & \text{if } k \geq 5. \end{cases} \]
Figure~\ref{fig:table213} shows values of $a_{n,k}(213)$ for other small values of $n$ and $k$.

\begin{figure}\label{fig:table213}
	\begin{center}
		\begin{tabular}{c||*{9}{p{6.5mm}|}}
			\backslashbox{$k$}{$n$} & \multicolumn{1}{c|}{\textbf{1}} &  \multicolumn{1}{c|}{\textbf{2}}&  \multicolumn{1}{c|}{\textbf{3}} &  \multicolumn{1}{c|}{\textbf{4}}&  \multicolumn{1}{c|}{\textbf{5}} &  \multicolumn{1}{c|}{\textbf{6}} & \multicolumn{1}{c|}{\textbf{7}} &  \multicolumn{1}{c|}{\textbf{8}} &  \multicolumn{1}{c|}{\textbf{9}}\\ \hline \hline
			\textbf{2} & 1 & 0 & 0 & 0 & 0 & 0 & 0 & 0 & 0\\
			\textbf{3} & 1 & 1 & 2 & 4 & 8 & 16 & 32 & 64 & 128\\
			\bf 4 & 1 & 1 & 2 & 5 & 13 & 34 & 89 & 233 & 610\\
			\bf 5 & 1 & 1 & 2 & 5 & 14 & 41 & 122 & 365 & 1094\\
			\bf 6 & 1 & 1 & 2 & 5 & 14 & 42 & 131 & 417 & 1341\\
			\bf 7 & 1 & 1 & 2 & 5 & 14 & 42 & 132 & 428 & 1416\\
			\bf 8 & 1 & 1 & 2 & 5 & 14 & 42 & 132 & 429 & 1429\\
			\bf 9 & 1 & 1 & 2 & 5 & 14 & 42 & 132 & 429 & 1430\\
		\end{tabular} 
		\caption{The number of permutations in $\A_n(\delta_k;213)$; that is, the number of cyclic permutations of length $n$ avoiding $k(k-1)\cdots1$ in one-line notation and avoiding $213$ in cycle form.}
	\end{center}
\end{figure}

In general, we will show that the permutations in  $\A_n(\delta_k;213)$ can be enumerated using a self-convolution recurrence as stated in the following theorem. The corresponding generating function can be found in Corollary~\ref{cor:213-k21}.

\begin{theorem}\label{thm:213-k21}
	For $n\geq 2$ and $k\geq 4$, \[a_{n,k}(213) = \sum_{i=1}^{n-1} a_{i,k}(213)a_{n-i,k-1}(213),\]
	with base cases
	\begin{itemize}
		\item $a_{1,1}(213)=0$ and $a_{1,k}(213) = 1$ for $k \geq 2$;
		\item $a_{n,1}(213) = a_{n,2}(213) = 0$ for $n \geq 2$; and
		\item $a_{n,3}(213) = 2^{n-2}$ for $n \geq 2$.
	\end{itemize}
\end{theorem}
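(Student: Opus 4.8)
The plan is to set up a bijective decomposition of permutations in $\A_n(\delta_k;213)$ that mirrors the self-convolution recurrence, treating the cycle form $C(\pi)=(1,c_2,\ldots,c_n)$ as the primary object. Since $C(\pi)$ avoids $213$ and begins with $1$, the word $c_2c_3\cdots c_n$ is a $213$-avoiding permutation of $\{2,\ldots,n\}$, and such words have a canonical recursive structure: letting $c_j$ be the position of the maximum value $n$, everything to its left forms a $213$-avoiding block and everything to its right forms another, with the left block's values all smaller than the right block's values (this is the standard Catalan decomposition, which is what produces the Catalan numbers noted in the text for $k\ge n$). My first step would be to make this split precise and understand how it interacts with the one-line notation of $\pi$, because the recurrence weights the two factors differently ($a_{i,k}$ against $a_{n-i,k-1}$), so the two pieces cannot be symmetric under the $\delta_k$-avoidance constraint.

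Next I would analyze how avoiding $\delta_k$ in one-line notation translates into a condition on the cycle word $c_2\cdots c_n$. The key observation to chase down is that a descending run in the cycle structure corresponds to a decreasing pattern in $\pi$ itself: concretely, I expect that the ``depth'' of nesting in the $213$-avoiding cycle form controls the length of the longest decreasing subsequence in $\pi$. This would explain why raising $k$ by one corresponds to allowing one more level of nesting, and hence why one factor in the convolution uses parameter $k$ and the other uses $k-1$. I would make this rigorous by identifying, for each position of the maximum $n$ in the cycle word, how the left block contributes to decreasing subsequences at one ``level'' and the right block at a level reduced by one (or vice versa), so that the combined permutation avoids $\delta_k$ exactly when the left block avoids $\delta_k$ and the right block avoids $\delta_{k-1}$.

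For the base cases I would argue directly: $a_{n,1}=a_{n,2}=0$ for $n\ge 2$ because avoiding $\delta_2=21$ forces the identity, which is not an $n$-cycle for $n\ge 2$, and $a_{1,k}$ is immediate. The case $a_{n,3}(213)=2^{n-2}$ requires counting cyclic permutations avoiding $321$ in one-line notation whose cycle form avoids $213$; I would verify this either by specializing the (yet-to-be-established) recurrence structure at $k=3$ — where the $k-1=2$ factor collapses the convolution to a simple doubling — or by a direct structural description of $321$-avoiding cyclic permutations (which are highly constrained, essentially built from a single ascending run with limited insertions), matching the powers of two already visible in the $k=3$ row of the table.

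The main obstacle I anticipate is pinning down exactly why the longest-decreasing-subsequence statistic of $\pi$ decomposes cleanly across the maximum-position split of the cycle word, rather than being confounded by interactions between the two blocks. In the one-line notation, entries of $\pi$ are determined by $c_{i+1}=\pi(c_i)$, so a decreasing subsequence of $\pi$ can in principle draw indices from both blocks, and I will need a careful argument (likely tracking where the value $n$ sits and how $\pi(c_j)$ behaves at the block boundary) to show that no decreasing subsequence can ``cross'' the split in a way that breaks the clean $k$ versus $k-1$ accounting. Getting that crossing argument airtight is where I expect the real work to lie; once it is in place, the convolution recurrence and the base cases should follow relatively mechanically.
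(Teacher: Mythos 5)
Your foundational structural claim is incorrect: for a $213$-avoiding word, splitting at the position of the maximum does \emph{not} produce two blocks with the left block's values all smaller than the right block's values. That value-separation-at-the-maximum property characterizes $231$-avoidance (or, with the inequality reversed, $132$-avoidance), not $213$-avoidance. A concrete counterexample inside this very problem: $C(\pi)=(1,4,5,2,3)$ avoids $213$, but the block to the left of the maximum $5$ is $\{4\}$ and the block to its right is $\{2,3\}$, so the left values are \emph{larger}. What $213$-avoidance actually forces is a split at the \emph{second} entry of the cycle, $j=\pi_1$: among the entries after $j$, all elements of $[j+1,n]$ must precede all elements of $[2,j-1]$, since any small element followed by a large one would create the pattern $j,\,\text{small},\,\text{large}$. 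This split at the first cycle entry (not at the maximum) is the decomposition the paper uses in Lemma~\ref{lem:as-k21}, and it is genuinely different from the one you propose.

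Second, and more seriously, the step you yourself flag as the ``main obstacle'' --- a direct crossing argument showing the combined permutation avoids $\delta_k$ exactly when one block avoids $\delta_k$ and the other avoids $\delta_{k-1}$ --- is not merely hard to make airtight; it is not how the asymmetry arises in a correct proof, and there is no indication such a local argument exists. In the paper's decomposition \emph{both} factors avoid the same $\delta_k$: one obtains $a_{n,k}(213)=\sum_j b_{j,k}(213)\,a_{n+1-j,k}(213)$, where $b_{j,k}(213)$ counts permutations in $\A_j(\delta_k;213)$ whose one-line notation begins with its maximum. The shift from $k$ to $k-1$ then comes from a separate, non-obvious identity $b_{n+1,k}(213)=a_{n,k-1}(213)$ (Lemma~\ref{lem:k21-a=b}), proved by induction by playing two convolution recurrences against each other; that induction in turn requires a second decomposition of the $\B$-permutations by their \emph{last} entry (Lemma~\ref{lem:bs-k21}), which itself rests on a lemma for the case $\pi_n=2$ in which the parameter drops by two, to $a_{n-2,k-2}(213)$ (Lemma~\ref{lem:n2s-k21}). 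None of this machinery is foreshadowed in your outline, and without it the convolution with mismatched parameters $k$ and $k-1$ does not follow. Incidentally, your fallback for the base case also fails: the recurrence is false at $k=3$, since $a_{m,2}(213)=0$ for $m\geq 2$ would collapse it to $a_{n,3}=a_{n-1,3}$ rather than a doubling --- which is precisely why the theorem restricts to $k\geq 4$ --- so the $k=3$ case must be established independently, as the paper does by citing prior work.
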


The remainder of this section will be devoted to proving this theorem. We begin by introducing new notation.  Let $\B_n(\delta_k;213)$ be the set of permutations in $\A_n(\delta_k;213)$ with the additional property that $\pi_1=n$, and let $b_{n,k}(213) = |\B_n(\delta_k;213)|$. It turns out that each permutation in $\pi \in \A_n(\delta_k;213)$ can be decomposed into two unique permutations based on the value of $\pi_1$; if $\pi_1 = j$, one of the permutations in the decomposition is in $\B_j(\delta_k;213)$, while the other is in $\A_{n+1-j}(\delta_k;213)$. 


\begin{lemma}\label{lem:as-k21} 
	For $n\geq 2$ and $k\geq 3$, \[a_{n,k}(213) = \sum_{j=2}^{n} b_{j,k}(213)a_{n+1-j,k}(213).\]
\end{lemma}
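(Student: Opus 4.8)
The plan is to prove the identity bijectively, by showing that the map $\pi\mapsto(\beta,\alpha)$ determined by the value $j:=\pi_1$ is a bijection from $\A_n(\delta_k;213)$ onto $\bigsqcup_{j=2}^{n}\B_j(\delta_k;213)\times\A_{n+1-j}(\delta_k;213)$. Since $\pi$ is an $n$-cycle with $n\ge 2$ we have $\pi(1)\ne 1$, so $j\ge 2$ and the index range $2\le j\le n$ is exactly right; summing $b_{j,k}(213)\cdot a_{n+1-j,k}(213)$ over $j$ then yields the claim.

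First I would extract the cycle structure forced by $213$-avoidance. Writing $C(\pi)=(1,j,c_3,\ldots,c_n)$, the word $c_3\cdots c_n$ lives on $\{2,\ldots,n\}\setminus\{j\}$, and I claim it splits as a block $H$ consisting of all values exceeding $j$ immediately after $j$, followed by a block $L$ of all values below $j$. Indeed, if some value below $j$ appeared before some value above $j$, then together with the leading $j$ they would form a $213$ pattern in the cycle; hence $H=\{j+1,\ldots,n\}$ comes first and $L=\{2,\ldots,j-1\}$ comes last, each block being $213$-avoiding as a subword. I then let $\beta$ be the permutation of $[j]$ with cycle $(1,j)$ followed by $L$, and $\alpha$ the standardization onto $[n+1-j]$ of the permutation with cycle $1$ followed by $H$. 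By construction $\beta_1=j$, so $\beta$ is a candidate element of $\B_j(\delta_k;213)$, while $\alpha$ has size $n+1-j$, which explains the overlap of one element.

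The $213$ condition transfers easily in both directions: $C(\beta)$ and $C(\alpha)$ arise from $C(\pi)$ by deleting a block and (for $\alpha$) standardizing, so they inherit $213$-avoidance; conversely, reassembling a $213$-avoiding $H$-block and a $213$-avoiding $L$-block in the order $(1,j,H,L)$ creates no $213$, since $1$ is the global minimum and so never the top or middle of a pattern, and no value above $j$ ever appears to the right of a value below $j$. The main obstacle is the transfer of $\delta_k$-avoidance, which I would phrase via longest decreasing subsequences: $\pi$ avoids $\delta_k$ exactly when its longest decreasing subsequence has length at most $k-1$. Translating the cycle relations into one-line notation, the word $\pi_1\cdots\pi_n$ breaks into a left block $\pi_1\cdots\pi_{j-1}$, all of whose values are at most $j$, and a right block $\pi_j\cdots\pi_n$, all of whose values exceed $j$ except for the single value $v_0$ (the image of the last element of $H$), which is the unique small value there. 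I would verify that $\beta$ is precisely the left block with $v_0$ appended and that the right block is order-isomorphic to $\alpha$. Because the blocks are separated in value, a decreasing subsequence of $\pi$ can pass from the left block into the right block only through $v_0$ and cannot continue past it, which forces the longest decreasing subsequence of $\pi$ to equal the maximum of those of $\beta$ and $\alpha$; hence $\pi$ avoids $\delta_k$ if and only if both $\beta$ and $\alpha$ do. The delicate point, and the reason this is the crux, is that concatenating permutations does not in general respect longest decreasing subsequences; it does so here only because of this value separation and the single bridging value $v_0$.

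Finally I would confirm bijectivity by exhibiting the inverse: from a pair $(\beta,\alpha)$ one recovers $j=|\beta|$, reads off $L$ from the cycle of $\beta$ and $H$ from the de-standardized cycle of $\alpha$, and reassembles $C(\pi)=(1,j,H,L)$; the two analyses above run in reverse to show $\pi\in\A_n(\delta_k;213)$ with $\pi_1=j$. This establishes the bijection and hence the stated convolution identity. I would also treat the boundary cases $j=2$ (where $L$ is empty and $v_0=1$) and $j=n$ (where $H$ is empty and $\alpha$ is the trivial element of $\A_1$) separately, since the bridging value and the contracted block degenerate there.
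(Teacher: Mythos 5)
Your proof is correct and follows essentially the same route as the paper's: you decompose $\pi$ by the value $j=\pi_1$, use $213$-avoidance of the cycle to force the high block $H$ before the low block $L$, and split $\pi$ into $\beta\in\B_j(\delta_k;213)$ (left block plus the bridging value $v_0$) and $\alpha\in\A_{n+1-j}(\delta_k;213)$ (the right block, standardized), exactly as the paper's $\pi''$ and $\pi'$. Your longest-decreasing-subsequence argument for why $\delta_k$-avoidance transfers through the single bridging value is a slightly more explicit version of the paper's observation that any $\delta_k$ pattern must lie entirely in $[1,j]$ or entirely in $[j+1,n]\cup\{v_0\}$.
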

\begin{proof} Let $j \in [2,n]$. We show that the number of permutations in $\A_n(\delta_k;213)$ where $\pi_1 = j$ is equal to $ b_{j,k}(213)a_{n+1-j,k}(213)$. To this end, let $\pi \in \A_n(\delta_k;213)$ and suppose that $\pi_1=j$. Since $C(\pi)$ avoids 213, we can write \[
	C(\pi) = (1,j, c_3, c_4, \ldots, c_{n-j+2},c_{n-j+3},\ldots, c_{n})
	\]
	where $\{c_3,\ldots, c_{n-j+2}\} = [j+1,n]$ and $\{c_{n-j+3},\ldots, c_{n}\} = [2,j-1]$. In one-line form, we then have  \[\pi = j\pi_2\pi_3\ldots\pi_{j-1}\pi_j\ldots \pi_n\] where $\{\pi_2,\ldots,\pi_{j-1}\} = [1,j-1]\setminus\{c_{n-j+3}\}$ and $\{\pi_j,\ldots,\pi_{n}\} = [j+1,n]\cup\{c_{n-j+3}\}$. Notice that this implies that if $\pi$ did contain $\delta_k$ as a pattern, then the $\delta_k$ pattern must  be composed of only elements of $[1,j]$ or only elements of $[j+1,n]\cup\{c_{n-j+3}\}$.
	
	We now create two cyclic permutations from $\pi$ that both avoid $\delta_k$.  Define $\pi'$ to be the permutation formed by deleting the elements in $[2,j]$ from $C(\pi)$. More formally, \[ C(\pi') = (1,c_3-j+1, c_4-j+1, \ldots, c_{n-j+2}-j+1). \] Equivalently, in one-line notation, $\pi'$ can be thought of as deleting the elements in $\{\pi_1, \pi_2, \cdots, \pi_{j-1}\}$ from $\pi$. Thus $\pi'$ consists of the last $n-j+1$ elements of $\pi$ in the same relative order.
	Therefore $\pi'$ is cyclic with $C(\pi')$ avoiding $213$ and $\pi'$ avoids $\delta_k$ in one-line notation, and we have $\pi' \in \A_{n+1-j}(\delta_k; 213)$.
	
	Similarly, define $\pi''$ to be the permutation formed by deleting the elements in $[j+1,n]$ from $C(\pi)$. In this case, \[C(\pi'') = (1, j, c_{n-j+3},\ldots, c_{n}), \] and
	\[ \pi'' = j\pi_2\pi_3\cdots\pi_{j-1}c_{n-j+3}. \] We again see that $\pi''$ is cyclic, its cycle form avoids 213, and its one-line form avoids $\delta_k$. The permutation $\pi''$ has the additional condition that 1 maps to $j$ and thus $\pi'' \in \B_{j}(\delta_k; 213)$.
	
	Conversely, suppose $\pi'$ is any permutation in $\A_{n+1-j}(\delta_k; 213)$ and $\pi''$ is any permutation in $\B_{j}(\delta_k; 213)$ with
	\[ C(\pi') = (1, c_2', c_3',\ldots, c_{n+1-j}') \]
	and
	\[ C(\pi'') = (1, j, c_3'', c_4'', \ldots, c_j'').\]
	Create the permutation $\pi$ by
	\[ C(\pi) = (1, j, c_2'+ j-1, c_3' + j-1, \ldots, c_{n+1-j}' + j-1, c_3'', c_4'', \ldots, c_j'').\]
	Notice $C(\pi)$ avoids 213 and its one-line notation must avoid $\delta_k$ since the one-line notation of both $\pi'$ and $\pi''$ did.  Thus the process is invertible and the number of permutations in $\A_n(\delta_k;213)$ where $\pi_1 = j$ is equal to $ b_{j,k}(213)a_{n+1-j,k}(213)$. Summing over all possible $j$ gives the desired result.
\end{proof}

The following example, along with Figure~\ref{fig:213-k21-1}, illustrates the bijective correspondance between $\A_n(\delta_k;213)$ and $\B_j(\delta_k;213) \times \A_{n+1-j}(\delta_k;213)$.
\begin{example}
	Consider the permutation $\pi'=4 315672=(1, 4, 5, 6, 7, 2, 3) \in \A_7(4321;213)$ and the permutation $\pi''=5 3 4 1 2=(1, 5, 2, 3, 4) \in \B_5(4321;213)$. Then we can obtain a permutation $\pi \in \A_{11}(4321;213)$ as described in the preceding proof: 
	\[
	\pi = 5\ 3\ 4\ 1\ 8\ 7\ 2\ 9\ 10\ 11\ 6=(1, 5, 8, 9, 10, 11, 6, 7, 2, 3, 4),
	\]
	as shown in Figure~\ref{fig:213-k21-1}.
\end{example}
\begin{figure}[htp]
	\centering
	\resizebox{\textwidth}{!}{
		\begin{tabular}{c}
			$\pi=5\ 3\ 4\ 1\ 8\ 7\ 2\ 9\ 10\ 11\ 6$\\
			$=(1,5,8,9,10,11,6,7,2,3,4)$ \\
			\begin{tikzpicture}[scale=.5]
				\newcommand\myx[1][(0,0)]{\pic at #1 {myx};}
				\tikzset{myx/.pic = {\draw [ultra thick]
						(-2.5mm,-2.5mm) -- (2.5mm,2.5mm)
						(-2.5mm,2.5mm) -- (2.5mm,-2.5mm);}}
				\draw[gray] (0,0) grid (11,11);
				\foreach \x/\y in {
					1/5,
					2/3,
					3/4,
					4/1,
					5/8,
					6/7,
					7/2,
					8/9,
					9/10,
					10/11,
					11/6
				}
				\myx[(\x-.5,\y-.5)];
				\draw[-, ultra thick,red!70!black] (.5,.5)--(.5,4.5)
				--(4.5,4.5);
				\draw[-, ultra thick,red!70!black] (4.5,1.5)
				--(1.5,1.5)--(1.5,2.5)
				--(2.5,2.5)--(2.5,3.5)
				--(3.5, 3.5)--(3.5,.5)--(.5,.5);
				\draw[-, ultra thick,green!70!black] (4.5,4.5)--(4.5,7.5)
				--(7.5,7.5)--(7.5,8.5)
				--(8.5,8.5)--(8.5,9.5)
				--(9.5,9.5)--(9.5,10.5)
				--(10.5, 10.5)--(10.5,5.5)
				--(5.5,5.5)--(5.5,6.5)
				--(6.5,6.5)--(6.5,1.5)
				--(4.5,1.5);
			\end{tikzpicture}
		\end{tabular}
		\begin{tabular}{ccc}
			&$\pi' = 4 \ 3 \ 1 \ 5 \ 6 \ 7 \ 2$ & $\pi''=5 \ 3 \  4 \ 1 \ 2$ \\ 
			&$=(1,4,5,6,7,2,3)$ & $=(1,5,2,3,4)$ \\
			$\longleftrightarrow$& \begin{tabular}{c}
				\begin{tikzpicture}[scale=.5]
					\newcommand\myx[1][(0,0)]{\pic at #1 {myx};}
					\tikzset{myx/.pic = {\draw [ultra thick]
							(-2.5mm,-2.5mm) -- (2.5mm,2.5mm)
							(-2.5mm,2.5mm) -- (2.5mm,-2.5mm);}}
					\draw[gray] (0,0) grid (7,7);
					\foreach \x/\y in {
						1/4,
						2/3,
						3/1,
						4/5,
						5/6,
						6/7,
						7/2
					}
					\myx[(\x-.5,\y-.5)];
					\draw[-, ultra thick,green!70!black] (.5,.5)--(.5,3.5)
					--(3.5,3.5)--(3.5,4.5)
					--(4.5,4.5)--(4.5,5.5)
					--(5.5,5.5)--(5.5,6.5)
					--(6.5,6.5)--(6.5,1.5)
					--(1.5,1.5)--(1.5,2.5)
					--(2.5,2.5)--(2.5,.5)
					--(.5,.5);
			\end{tikzpicture} \end{tabular}&
			\begin{tabular}{c}
				\begin{tikzpicture}[scale=.5]
					\newcommand\myx[1][(0,0)]{\pic at #1 {myx};}
					\tikzset{myx/.pic = {\draw [ultra thick]
							(-2.5mm,-2.5mm) -- (2.5mm,2.5mm)
							(-2.5mm,2.5mm) -- (2.5mm,-2.5mm);}}
					\draw[gray] (0,0) grid (5,5);
					\foreach \x/\y in {
						1/5,
						2/3,
						3/4,
						4/1,
						5/2
					}
					\myx[(\x-.5,\y-.5)];
					\draw[-, ultra thick,red!70!black] (.5,.5)--(.5,4.5)
					--(4.5,4.5)--(4.5,1.5)
					--(1.5,1.5)--(1.5,2.5)
					--(2.5,2.5)--(2.5,3.5)
					--(3.5, 3.5)--(3.5,.5)--(.5,.5);
				\end{tikzpicture}
			\end{tabular}
		\end{tabular}
	}
	\caption{The permutation $\pi\in\A_{11}(4321;213)$ with $\pi_{1}=5$ can be decomposed into a permutation $\pi'~\in~\A_7(4321;213)$ and $\pi''\in\B_5(4321;213)$.}
	\label{fig:213-k21-1}
\end{figure}

Because each element in $\A_n(\delta_k;213)$ can be decomposed as two permutations in $\B_j(\delta_k; 213)$ and in $\A_{n+1-j}(\delta_k;213)$, being able to enumerate $\B_j(\delta_k;213)$ will help us enumerate $\A_n(\delta_k;213)$. We will enumerate $\B_n(\delta_k;213)$ by first counting all of those permutations in $\B_n(\delta_k;213)$ with the additional condition  $\pi_n=2$. The next lemma shows that set of permutations in $\B_n(\delta_k;213)$ with  $\pi_n=2$ is in bijective correspondence with $\A_{n-2}(\delta_{k-2};213)$.
\begin{lemma}\label{lem:n2s-k21}
	For $n\geq 3$, the number of permutations in $\B_n(\delta_k;213)$ with $\pi_n=2$ is equal to $a_{n-2,k-2}(213)$. 
\end{lemma}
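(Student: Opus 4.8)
The plan is to construct an explicit bijection between the set of permutations in $\B_n(\delta_k;213)$ with $\pi_n=2$ and the set $\A_{n-2}(\delta_{k-2};213)$, realized by stripping off the two extreme values $n$ and $2$.

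First I would fix the cycle form. If $\pi\in\B_n(\delta_k;213)$ satisfies $\pi_n=2$, then $\pi(1)=n$ forces $C(\pi)$ to begin with $(1,n,\dots)$, while $\pi(n)=2$ forces the successor of $n$ to be $2$, so
\[
C(\pi)=(1,n,2,c_4,c_5,\dots,c_n),\qquad \{c_4,\dots,c_n\}=\{3,4,\dots,n-1\}.
\]
As in the proof of Lemma~\ref{lem:as-k21}, the prefix $1,n,2$ can never participate in an occurrence of $213$: the value $1$ is the smallest and leftmost, $n$ is the largest and sits in second position, and $2$ is the least of the remaining values with its only smaller value $1$ occurring before it. Hence $C(\pi)$ avoids $213$ if and only if $c_4\cdots c_n$ does.

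Next I would define the map. Deleting the entries $n$ and $2$ splices the cycle into $(1,c_4,\dots,c_n)$, and standardizing the value set $\{1,3,4,\dots,n-1\}$ down to $[n-2]$ gives a cyclic permutation $\pi'$ with
\[
C(\pi')=(1,\,c_4-1,\,c_5-1,\,\dots,\,c_n-1).
\]
A short computation tracing the cycle shows that $\pi'$ is exactly the standardization of the one-line word $\pi_2\pi_3\cdots\pi_{n-1}$ obtained by erasing the first and last entries of $\pi$. By the previous paragraph $C(\pi')$ avoids $213$, and the construction is plainly reversible: from $\sigma\in\A_{n-2}(\delta_{k-2};213)$ with $C(\sigma)=(1,d_2,\dots,d_{n-2})$ one recovers $C(\pi)=(1,n,2,d_2+1,\dots,d_{n-2}+1)$. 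It then remains to match the one-line avoidance conditions, i.e.\ to prove that $\pi$ avoids $\delta_k$ if and only if $\pi'$ avoids $\delta_{k-2}$.

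This equivalence of decreasing subsequences is the heart of the argument, and the step I expect to be the main obstacle. One direction is clean: since $n$ is the global maximum in the first position and $2$ sits in the last position, any decreasing subsequence of $\pi$ contains $n$ and $2$ at most once each, so erasing these two entries leaves a decreasing subsequence of length at least $k-2$ among $\pi_2\cdots\pi_{n-1}$; passing to $\pi'$, a $\delta_k$ in $\pi$ produces a $\delta_{k-2}$ in $\pi'$. For the converse I would take a decreasing subsequence of length $k-2$ in $\pi'$, lift it to the middle of $\pi$, prepend the entry $n$ (legal since $n$ exceeds every value and sits at position $1$), and append the entry $2$ (at position $n$). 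Appending $2$ is legal precisely when the chosen subsequence terminates in a value exceeding $2$, and the only obstruction is a subsequence forced to end at the image of the value $1$. The delicate point — where the real work lies — is to show that a length $k-2$ decreasing subsequence of $\pi'$ can always be selected to end above this value, by trimming a trailing minimal element and re-extending when necessary; controlling this interaction between the two smallest values and the two boundary positions is exactly what makes the forward direction subtle. Once it is in place, the bijection respects both avoidance conditions, and since there is a single cycle form for each choice of $c_4\cdots c_n$, we conclude $\lvert\{\pi\in\B_n(\delta_k;213):\pi_n=2\}\rvert=a_{n-2,k-2}(213)$.
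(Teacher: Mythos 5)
Your construction is identical to the paper's---delete the values $n$ and $2$ from $C(\pi)=(1,n,2,c_4,\ldots,c_n)$, standardize, and observe that in one-line notation this just erases the first and last entries---and the parts you actually prove are correct: the prefix $1,n,2$ cannot participate in a $213$ occurrence, the cycle-level map is invertible, and a $\delta_k$ in $\pi$ yields a $\delta_{k-2}$ in $\pi'$. But the converse, which you explicitly set aside as ``the delicate point where the real work lies,'' is the entire content of the lemma, and your sketch of it (``trimming a trailing minimal element and re-extending when necessary'') cannot be completed by subsequence manipulations alone, because the claim it needs is simply false for general $\pi'$. Take $\pi'=54213$: its unique occurrence of $\delta_4$ is $5421$, which ends at the value $1$, and the value $2$ lies to the \emph{left} of $1$, so there is nothing to re-extend with. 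Lifting as in your bijection (with $n=7$, $k=6$) gives $\pi=7653142$, whose longest decreasing subsequence has length $5$, so $\pi$ avoids $\delta_6$ while $\pi'$ contains $\delta_4$. Of course here $C(\pi')=(1,5,3,2,4)$ contains $213$ (namely $3,2,4$)---and that is exactly the point: the step you deferred is precisely where the hypothesis that $C(\pi')$ avoids $213$ must be invoked, and your proposal never uses it.

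The missing argument, which is what the paper's proof supplies, runs as follows. Suppose an occurrence $\pi'_{i_1}>\cdots>\pi'_{i_{k-2}}$ of $\delta_{k-2}$ in $\pi'$ ends at the value $1$, say $\pi'_r=1$, and suppose toward a contradiction that the value $2$ appeared earlier, say $\pi'_s=2$ with $1<s<r$. Since the cycle form of $\pi'$ begins with $1$, the entry $r$ is the \emph{last} entry of $C(\pi')$, and the entry $s$ is immediately followed by $2$ somewhere inside it; hence $C(\pi')$ contains the subsequence $s,2,r$, which is a $213$ pattern because $2<s<r$---contradiction (the degenerate case $s=1$, where $C(\pi')$ starts $(1,2,\ldots)$, needs a separate short remark, as in the paper). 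Therefore $2$ lies to the right of $1$ in $\pi'$; consequently the penultimate value $\pi'_{i_{k-3}}$ cannot equal $2$, so it is at least $3$, and replacing the trailing $1$ by that later $2$ produces a $\delta_{k-2}$ occurrence ending above the minimum. This occurrence lifts into $\pi_2\cdots\pi_{n-1}$ and extends by $n$ in front and $2$ at the back to a $\delta_k$ in $\pi$. Until this step is in place, your map is well defined but you have not shown it carries $\{\pi\in\B_n(\delta_k;213):\pi_n=2\}$ \emph{onto} $\A_{n-2}(\delta_{k-2};213)$, so the count is not established.
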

\begin{proof}
	Suppose $\pi$ is a cyclic permutation on $n$ elements where $C(\pi)$ avoids 213, and with the additional conditions that $\pi_1 = n$ and $\pi_n=2$. Then \[C(\pi) = (1,n,2,c_4,\ldots, c_n).\]
	Now consider the permutation $\pi'$ where $C(\pi')$ is formed by deleting $n$ and $2$ from $C(\pi)$. In other words,
	\[C(\pi') = (1,c_4-1,\ldots,c_n-1).\] 
	In one-line notation, we note that $\pi'$ is formed from $\pi$ by deleting $n$ and $2$, and conversely, $\pi$ is obtained from $\pi'$ by inserting $n$ at the front and $2$ at the end of $\pi'$.  
	
	We further claim that if $\delta_{k-2}$ is a pattern in $\pi'$, then there must be an occurrence of $\delta_{k-2}$ that does not use the element 1. Toward a contradiction, suppose we have an occurrence of $\delta_{k-2}$ in $\pi'$ that uses 1, namely $\pi'_{i_1}\ldots \pi'_{i_{k-2}}$ with $\pi'_{i_{k-2}}=1$. Then, we can show that $2$ must occur after 1 in $\pi'$ and thus the pattern induced by $\pi'_{i_1}\ldots \pi'_{i_{k-3}}2$ is also a $\delta_{k-2}$ pattern. To see this suppose 2 appears before 1 in $\pi'$. Then if $\pi_r=1$ and $\pi_s=2$ with $1<s<r$, the pattern $s2r$ appears in $C(\pi')$, but this is a 213 pattern, contradicting that $C(\pi')$ avoids 213. Notice that if $s=1,$ then the cycle form starts $(1,2,\ldots)$, and thus we can delete the 2 and apply the same argument since this 2 cannot be part of a 213 pattern in $C(\pi)$ or a $\delta_k$ in the one-line notation of $\pi$. This implies that $\pi'$ avoids $\delta_{k-2}$ if and only if $\pi$ avoids $\delta_k$.

	We have shown that $\pi \in \B_n(\delta_k;213)$ with $\pi_n=2$ if and only if $\pi' \in \A_{n-2}(\delta_{k-2};213)$ and therefore the result holds.
\end{proof}


To count those permutations in $\B_n(\delta_k;213)$ where $\pi_n =j \in [3,n-1]$, we can consider a decomposition based on $j$ as seen in the next result.

\begin{lemma}\label{lem:bs-k21} 
	For $n\geq 3$ and $k\geq 2$, \[b_{n,k} (213) = \sum_{j=2}^{n-1} b_{j,k}(213)a_{n-j,k-2}(213).\] 
\end{lemma}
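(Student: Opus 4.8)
The plan is to refine $\B_n(\delta_k;213)$ according to the value $j:=\pi_n$ and to show that the number of such permutations with $\pi_n=j$ is exactly $b_{j,k}(213)\,a_{n-j,k-2}(213)$; summing over $j\in[2,n-1]$ then gives the identity, with the $j=2$ term recovering Lemma~\ref{lem:n2s-k21} since $b_{2,k}(213)=1$. First I would note that for $\pi\in\B_n(\delta_k;213)$ we have $C(\pi)=(1,n,c_3,\dots,c_n)$ and $\pi_n=\pi(n)=c_3$, so $j=c_3\in[2,n-1]$. As in the proof of Lemma~\ref{lem:as-k21}, since $C(\pi)$ avoids $213$ and begins $(1,n,j,\dots)$, every element exceeding $j$ must precede every element less than $j$ among $c_4,\dots,c_n$: a smaller-than-$j$ entry followed by a larger-than-$j$ entry would form a $213$ with $j$. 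Hence
\[
C(\pi)=(1,\,n,\,j,\,L,\,S),
\]
where $L$ is a $213$-avoiding arrangement of $\{j+1,\dots,n-1\}$ and $S$ is a $213$-avoiding arrangement of $\{2,\dots,j-1\}$.

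Next I would define the two pieces by restriction. Deleting from $C(\pi)$ every element greater than $j$ (that is, $n$ and all of $L$) leaves $(1,j,S)$, a cyclic permutation $\pi''$ on $[j]$ with $\pi''_1=j$, so $\pi''\in\B_j(\delta_k;213)$. Deleting instead every element at most $j$ (that is, $j$ and all of $S$) and relabelling the survivors $\{1\}\cup L$ by subtracting $j-1$ from each entry exceeding $1$ yields a cyclic permutation $\pi'$ on $[n-j]$. Both cycle forms avoid $213$, being subwords of a $213$-avoiding cycle, and the construction is reversible once the pattern conditions below are verified: from any pair $(\pi'',\pi')$ one reassembles $C(\pi)=(1,n,j,\,[L\text{ shifted up by }j-1],\,S)$.

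The heart of the proof is the one-line avoidance transfer,
\[
\pi\text{ avoids }\delta_k \iff \pi''\text{ avoids }\delta_k\ \text{and}\ \pi'\text{ avoids }\delta_{k-2}.
\]
Writing $d(\rho)$ for the length of the longest decreasing subsequence of $\rho$ (so $\rho$ avoids $\delta_m$ iff $d(\rho)\le m-1$), I would first read off the one-line form of $\pi$: it begins with $\pi_1=n$, ends with $\pi_n=j$, places the values below $j$ on positions $2,\dots,j-1$ (plus one further small value at an interior position), and places the values above $j$ on the remaining interior positions. In particular the restriction of $\pi$ to positions $\{j,j+1,\dots,n-1\}$ is order-isomorphic to $\pi'$, while the small positions together with the appended small value reproduce the part of $\pi''$ lying below its maximum. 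Since $\pi_1=n$ is the global maximum in the first position it may be prepended to any decreasing subsequence, and since $\pi_n=j$ sits in the last position it may be appended to any decreasing subsequence whose final term exceeds $j$. Thus a decreasing subsequence of length $m$ in $\pi'$ lifts to one of length $m+2$ in $\pi$, while splitting an arbitrary decreasing subsequence of $\pi$ at the value $j$ shows $d(\pi)\le\max\{d(\pi''),\,2+d(\pi')\}$; these give the two implications.

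The step I expect to be the main obstacle is making the $+2$ extension legitimate in the forward direction: a length-$(k-2)$ decreasing subsequence of $\pi'$ might terminate at the minimal entry of $\pi'$, which corresponds to a value below $j$ in $\pi$, so that $j$ cannot be appended directly. This is exactly the difficulty resolved in Lemma~\ref{lem:n2s-k21}: using that $C(\pi)$ avoids $213$ one shows that the value $2$ occurs after the value $1$ in the one-line form of $\pi'$, which lets one relocate the terminal (minimal) entry of the subsequence to a larger value and hence produce a length-$(k-2)$ decreasing subsequence ending above the entry corresponding to $j$; prepending $n$ and appending $j$ then yields a $\delta_k$ in $\pi$. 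Once this transfer is established in both directions, $\pi\mapsto(\pi'',\pi')$ is a bijection from $\{\pi\in\B_n(\delta_k;213):\pi_n=j\}$ onto $\B_j(\delta_k;213)\times\A_{n-j}(\delta_{k-2};213)$, and summing cardinalities over $j\in[2,n-1]$ proves the lemma.
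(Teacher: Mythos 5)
Your proof is correct and takes essentially the same route as the paper's: the same decomposition of $C(\pi)=(1,n,j,L,S)$ at $j=\pi_n$ into a small part $(1,j,S)\in\B_j(\delta_k;213)$ and a large part, with the count $a_{n-j,k-2}(213)$ ultimately supplied by Lemma~\ref{lem:n2s-k21}. The only difference is organizational: the paper keeps $n$ and $j$ in the large part, viewing it as an element of $\B_{n-j+2}(\delta_k;213)$ that begins with its maximum and ends in $2$, and then invokes Lemma~\ref{lem:n2s-k21} as a counting black box, whereas you delete $n$ and $j$ immediately and re-derive the $\delta_k\leftrightarrow\delta_{k-2}$ avoidance transfer by inlining that lemma's ``$2$ occurs after $1$'' argument.
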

\begin{proof}
	We show that the number of permutations in $\B_n(\delta_k;213)$ where $\pi_n = j \in [3,n-1]$ is equal to $ b_{j,k}(213)a_{n+1-j,k}(213)$. To this end, let $\pi \in \B_n(\delta_k;213)$ and suppose that $\pi_n=j$. 
	
	Since $C(\pi)$ avoids 213, we can write \[
	C(\pi) = (1,n,j, c_4, c_5, \ldots, c_{n-j+2},c_{n-j+3},\ldots, c_{n})
	\]
	where $\{c_4,\ldots, c_{n-j+2}\} = [j+1,n-1]$ and $\{c_{n-j+3},\ldots, c_{n}\} = [2,j-1]$. This implies that $\pi$ must be of the form \[\pi = n\pi_2\pi_3\ldots\pi_{j-1}\pi_j\ldots \pi_{n-1}j\] where $\{\pi_2,\ldots,\pi_{j-1}\} = [1,j-1]\setminus\{c_{n-j+3}\}$ and $\{\pi_j,\ldots,\pi_{n-1}\} = [j+1,n-1]\cup\{c_{n-j+3}\}$. Now, if $\pi$ contains $\delta_k$ as a pattern, it must either contain only elements of $[1,j]\cup\{n\}$ or only elements of $[j,n]\cup\{c_n-j+2\}$. Therefore, if $\pi$ avoids $\delta_k$, we can  decompose $\pi$ into two smaller cyclic permutations, each of which avoids $\delta_k$, namely, $\pi'\in \B_{n-j+2}(\delta_k;213)$ and $\pi''\in\B_{j}(\delta_k;213)$, defined by 
	\[C(\pi') = (1, n-j+2, 2, c_4-j+2, c_5-j+2, \ldots, c_{n-j+2}-j+2) \]
	and 
	\[C(\pi'') = (1, j, c_{n-j+3},\ldots, c_{n}).\]
	Notice that $\pi'$ can be any permutation in $\B_{n-j+2}(\delta_k;213)$ with $\pi'_1=n-j+2$ (the largest element) and $\pi'_{n-j+2}=2$, while $\pi''$ can be any permutation in $\B_{j}(\delta_k;213)$ with $\pi''_1=j$. By Lemma~\ref{lem:n2s-k21}, the number of such $\pi'$ is given by $a_{n-j,k-2}(213)$ and so the recurrence follows. 
\end{proof}

We again illustrate this decomposition with a specific example.

\begin{example} Consider the permutation $$\pi= 13\ 1 \ 2\ 3\ 7\ 4\ 6 \ 9\ 12\ 5\ 10\ 11\ 8
	=(1,13,8,9,12,11,10,5,7,6,4,3,2).$$ Notice that $\pi \in \B_{13, 4}$ and that $\pi_{13} = 8.$ Following the proof of the preceding lemma, we define two new permutations, $\pi'$ and $\pi''$ by
	\[ C(\pi') = (1, 7,2,3,6,5,4) \quad \text{and} \quad C(\pi'') = (1,8,5,7,6,4,3,2).\]
	We note that $\pi' \in \B_7(\delta_4;213)$ and $\pi'' \in \B_8(\delta_4;213)$ as desired. A pictorial representation of this decomposition is shown in Figure~\ref{fig:213-k21-2}.
\end{example}
\begin{figure}[htp]
	\centering
	\resizebox{\textwidth}{!}{
		\begin{tabular}{c}
			$\pi= 13\ 1 \ 2\ 3\ 7\ 4\ 6 \ 9\ 12\ 5\ 10\ 11\ 8$ \\
			$=(1,13,8,9,12,11,10,5,7,6,4,3,2)$\\
			\begin{tikzpicture}[scale=.5]
				\newcommand\myx[1][(0,0)]{\pic at #1 {myx};}
				\tikzset{myx/.pic = {\draw  [ultra thick]
						(-2.5mm,-2.5mm) -- (2.5mm,2.5mm)
						(-2.5mm,2.5mm) -- (2.5mm,-2.5mm);}}
				\draw[blue!80!red, dashed, thick] (0,0)--(13,13);
				\draw[gray] (0,0) grid (13,13);
				\foreach \x/\y in {
					1/13,
					2/1,
					3/2,
					4/3,
					5/7,
					6/4,
					7/6,
					8/9,
					9/12,
					10/5,
					11/10,
					12/11,
					13/8
				}
				\myx[(\x-.5,\y-.5)];
				\draw[-, ultra thick,red!70!black] (.5,.5)-- (.5,12.5)
				--(12.5,12.5)--(12.5,7.50000000000000)
				--(7.50000000000000,7.50000000000000)--(7.5,8.5)--(8.50000000000000,8.50000000000000)--(8.50000000000000,11.50000000000000)
				--(11.50000000000000,11.50000000000000)--(11.50000000000000,10.50000000000000)-- (10.50000000000000,10.50000000000000)--
				(10.50000000000000,9.50000000000000)
				--(9.50000000000000,9.50000000000000)--(9.50000000000000,4.50000000000000)
				--(4.50000000000000,4.50000000000000);
				\draw[-, ultra thick,green!70!black] (9.50000000000000,4.50000000000000)
				--(4.50000000000000,4.50000000000000)--(4.500000000000000,6.5)--(6.5,6.5)--(6.5,5.5)
				--(5.5,5.5)--(5.5,3.5)
				--(3.5,3.5)--(3.5,2.5)
				--(2.5,2.5)--(2.5,1.5)
				--(1.5,1.5)--(1.5,.5)
				--(.5,.5)--(.5,6.5);
			\end{tikzpicture} 
		\end{tabular}
		\begin{tabular}{ccc}
			&$\pi' = 7 \ 3\ 6\ 1\ 4 \ 5 \ 2$ & $\pi'' = 8\ 1\ 2\ 3\ 7\ 4\ 6\ 5$  \\ 
			&$=(1,7,2,3,6,5,4)$ & $=(1,8,5,7,6,4,3,2)$ \\
			$\longleftrightarrow$& \begin{tabular}{c}
				\begin{tikzpicture}[scale=.5]
					\newcommand\myx[1][(0,0)]{\pic at #1 {myx};}
					\tikzset{myx/.pic = {\draw [ultra thick]
							(-2.5mm,-2.5mm) -- (2.5mm,2.5mm)
							(-2.5mm,2.5mm) -- (2.5mm,-2.5mm);}}
					\draw[blue!80!red, dashed, thick] (0,0)--(7,7);
					\draw[gray] (0,0) grid (7,7);
					\foreach \x/\y in {
						1/7,
						2/3,
						3/6,
						4/1,
						5/4,
						6/5,
						7/2
					}
					\myx[(\x-.5,\y-.5)];
					\draw[-, ultra thick,red!70!black] (.5,.5)--(.5,6.5)
					--(6.5,6.5)--(6.5,1.5)
					--(1.5,1.5)--(1.5,2.5)
					--(2.5,2.5)--(2.5,5.5)
					--(5.5,5.5)--(5.5,4.5)
					--(4.5,4.5)--(4.5,3.5)
					--(3.5,3.5) -- (3.5,.5)
					--(0.5,.5);
				\end{tikzpicture}
			\end{tabular}&
			\begin{tabular}{c}
				\begin{tikzpicture}[scale=.5]
					\newcommand\myx[1][(0,0)]{\pic at #1 {myx};}
					\tikzset{myx/.pic = {\draw  [ultra thick]
							(-2.5mm,-2.5mm) -- (2.5mm,2.5mm)
							(-2.5mm,2.5mm) -- (2.5mm,-2.5mm);}}
					\draw[blue!80!red, dashed, thick] (0,0)--(8,8);
					\draw[gray] (0,0) grid (8,8);
					\foreach \x/\y in {
						1/8,
						2/1,
						3/2,
						4/3,
						5/7,
						6/4,
						7/6,
						8/5
					}
					\myx[(\x-.5,\y-.5)];
					\draw[-, ultra thick,green!70!black] (.5,.5)--(.5,7.5)
					--(7.5,7.5)--(7.5,4.50000000000000)
					--(4.50000000000000,4.50000000000000)--(4.50000000000000,6.50000000000000)
					--(6.50000000000000,6.50000000000000)--(6.50000000000000,5.50000000000000)
					--(5.50000000000000,5.50000000000000)--(5.50000000000000,3.50000000000000)
					--(3.50000000000000,3.50000000000000)--(3.50000000000000,2.50000000000000)
					--(2.50000000000000,2.50000000000000)--(2.50000000000000,1.50000000000000)
					--(1.50000000000000,1.50000000000000)--(1.50000000000000,0.500000000000000)
					--(0.500000000000000,.5);
			\end{tikzpicture} \end{tabular}
		\end{tabular}
	}
	\caption{The permutation $\pi\in\B_{13}(4321;213)$ with $\pi_{13}=8$ can be decomposed into a permutation $\pi'~\in~\B_7(4321;213)$ and $\pi''\in\B_8(4321;213)$.}
	\label{fig:213-k21-2}
\end{figure}

Lemma~\ref{lem:bs-k21} gives $b_{n,k}$ as a sum of products of $b_{j,k}$ and $a_{n-j,k-2}$ (for $j$ from 2 to $n-1$) while Lemma~\ref{lem:as-k21} gives $a_{n,k}$ as a sum of products of $b_{j,k}$ and $a_{n+1-j,k}$ (for $j$ from 2 to $n$). We can combine these results to get $b_{n+1,k} = a_{n,k-1}$ as seen in the next lemma.

\begin{lemma}\label{lem:k21-a=b}
	For $n\geq 3$ and $k\geq 3$, \[b_{n+1,k}(213) = a_{n,k-1}(213).\]
\end{lemma}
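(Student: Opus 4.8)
The plan is to prove the identity by a double induction that feeds Lemma~\ref{lem:bs-k21} (read at level $k$) into Lemma~\ref{lem:as-k21} (read at level $k-1$), using the inductive hypothesis at level $k-1$ as the bridge between the two parities of the recurrences. I would induct on $k$, and for each fixed $k$ run an inner induction on $n$, proving the slightly stronger statement that $b_{m+1,k}(213)=a_{m,k-1}(213)$ for all $m\ge 1$. The cases $m=1,2$ reduce to the directly checkable equalities $b_{2,k}(213)=1=a_{1,k-1}(213)$ and $b_{3,k}(213)=1=a_{2,k-1}(213)$, which also anchor the inner induction.

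The engine of the argument is the consequence of the outer hypothesis. Assuming $b_{j,k-1}(213)=a_{j-1,k-2}(213)$ for all $j$, I substitute into Lemma~\ref{lem:as-k21} at level $k-1$, namely $a_{n,k-1}=\sum_{j=2}^{n}b_{j,k-1}\,a_{n+1-j,k-1}$, and reindex by $i=j-1$ to obtain the self-convolution
\[
a_{n,k-1}(213)=\sum_{i=1}^{n-1}a_{i,k-2}(213)\,a_{n-i,k-1}(213).
\]
Then comes the inner induction on $n$: assuming $b_{j,k}(213)=a_{j-1,k-1}(213)$ for all $j\le m$, I apply Lemma~\ref{lem:bs-k21} at level $k$ to $b_{m+1,k}=\sum_{j=2}^{m}b_{j,k}\,a_{m+1-j,k-2}$, substitute the inner hypothesis, and reindex by $i=j-1$, reaching $\sum_{i=1}^{m-1}a_{i,k-1}(213)\,a_{m-i,k-2}(213)$. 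Since the convolution is symmetric under $i\leftrightarrow m-i$, this equals $\sum_{i=1}^{m-1}a_{i,k-2}(213)\,a_{m-i,k-1}(213)$, which by the displayed identity is exactly $a_{m,k-1}(213)$. This closes the induction.

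Equivalently, and perhaps more transparently, one can repackage the two lemmas as generating-function identities for $A_k(z)=\sum_n a_{n,k}(213)z^n$ and $B_k(z)=\sum_n b_{n,k}(213)z^n$: Lemma~\ref{lem:as-k21} becomes $B_kA_k=2zA_k-z^2$, while Lemma~\ref{lem:bs-k21} becomes $(B_k-z)(1-A_{k-2})=z^2$. The target is $B_k=z(1+A_{k-1})$. Feeding the hypothesis $B_{k-1}=z(1+A_{k-2})$ into the first relation at level $k-1$ yields the key relation $A_{k-1}(1-A_{k-2})=z$, and substituting this into the second relation at level $k$ gives $B_k=z(1+A_{k-1})$ immediately. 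All of these manipulations are legitimate in $\Z[[z]]$, since $A_k$ has zero constant term, so $1-A_{k-2}$ is invertible and $z^2/A_k$ is a genuine power series.

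The step I expect to be the real obstacle is the base case of the induction on $k$. The reductions behind Lemma~\ref{lem:n2s-k21} and Lemma~\ref{lem:bs-k21} degenerate once $k-2$ is so small that $\delta_{k-2}$ is forced on every permutation, so the self-convolution at level $k-1$ only becomes available once $k-1$ is large enough; in particular the relation $A_{k-1}(1-A_{k-2})=z$ fails at the very bottom of the range. Consequently the smallest admissible value of $k$ must be handled directly, anchoring the induction on explicit data (for instance the formula $a_{n,3}(213)=2^{n-2}$ together with a hands-on count of the relevant $b_{n,k}(213)$) before the recursive step can take over. Pinning down the exact smallest $k$ for which the clean step applies, and verifying that base case by a direct structural count of the permutations in $\B_n(\delta_k;213)$, is where the care is needed.
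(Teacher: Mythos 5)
Your inductive step is, at its core, the same argument as the paper's: the paper also proves the lemma by a double induction that feeds Lemma~\ref{lem:bs-k21} (at level $k$) and Lemma~\ref{lem:as-k21} (at level $k-1$) into one another, substituting the inductive hypotheses $b_{j,k-1}(213)=a_{j-1,k-2}(213)$ and $a_{n+1-j,k-1}(213)=b_{n+2-j,k}(213)$ and reindexing until the two sums coincide; your version merely reorganizes this (first extracting the self-convolution for $a_{n,k-1}$ from the outer hypothesis, then using symmetry of the convolution) and then repackages the whole thing in generating functions. On the recursive step the two proofs are essentially identical.

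Where you genuinely depart from the paper is in taking the base case seriously, and your suspicion is correct --- indeed the situation is worse than the paper lets on. The statement as printed ($k\geq 3$) fails at $k=3$: $b_{4,3}(213)=1$ (the permutation $4123$, with cycle $(1,4,3,2)$) while $a_{3,2}(213)=0$. Moreover, Lemma~\ref{lem:bs-k21} itself fails at $k=4$: it would give $b_{5,4}(213)=b_{2,4}a_{3,2}+b_{3,4}a_{2,2}+b_{4,4}a_{1,2}=2$, but a direct count gives $b_{5,4}(213)=4$ (namely $53412$, $51423$, $53124$, $51234$); the culprit is Lemma~\ref{lem:n2s-k21}, whose delete-$n$-and-$2$ correspondence breaks when $k-2=2$ (e.g.\ $4312\in\B_4(\delta_4;213)$ is sent to $21$, which does not avoid $\delta_2$). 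Consequently the clean inductive step --- yours or the paper's --- is only available for $k\geq 5$, and the case $k=4$, i.e.\ $b_{n+1,4}(213)=a_{n,3}(213)=2^{n-2}$, requires a separate structural argument about $\B_{n+1}(\delta_4;213)$ that neither you nor the paper supplies: you explicitly flag it as the remaining obstacle, while the paper (whose step at $k=4$ would literally invoke the false $k=3$ statement) hides it behind ``it is straightforward to check the base cases.'' So your proposal reproduces the paper's proof, including its one real gap, but it has the merit of locating that gap precisely.
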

\begin{proof}
	We prove this by induction. It is straightforward to check the base cases. Let us assume the statement is true for all $N<n$ and $K<k$. Notice that by Lemmas \ref{lem:as-k21} and \ref{lem:bs-k21}, we can write 
	\[b_{n+1,k}(213) = \sum_{i=2}^{n} b_{i,k}(213)a_{n+1-i,k-2}(213)
	\]
	and
	\[a_{n,k-1}(213) = \sum_{j=2}^{n} b_{j,k-1}(213)a_{n+1-j,k-1}(213).\]
	Using induction, we can say that $b_{j,k-1}(213) = a_{j-1,k-2}(213)$ and $a_{n+1-j,k-1}(213)=b_{n+2-j,k}(213)$. Reindexing, we can see that the new formula for $a_{n,k-1}(213)$ coincides with that for $b_{n+1,k}(213)$.
\end{proof}

We are now ready to prove our main result as stated in Theorem~\ref{thm:213-k21}.

\begin{proof}[Proof of Theorem~\ref{thm:213-k21}]
	The base cases $a_{1,1}(213)=0$ and $a_{1,k}(213)=1$ for $k\geq 2$ are clear since there is only one permutation of length $1$, which must avoid any pattern of length $k\geq 2.$ For $n\geq2$, it is also clear that $a_{n,1}(213)=a_{n,2}(213)=0$ since it is impossible for any permutation in $\S_n$ to avoid the pattern of length 1 and impossible for a cyclic permutation to avoid $21$. 
	The base case $a_{n,3}(212)=2^{n-2}$ for $n\geq 2$ can be found in \cite[Theorem 3.24]{ABBGJ}.
	Finally, by combining Lemmas \ref{lem:as-k21} and \ref{lem:k21-a=b} and reindexing, the recurrence in the statement of theorem follows.
\end{proof}


For any fixed $k$, the generating function for $a_{n,k}(213)$ can be described recursively as seen in the following corollary.

\begin{corollary}\label{cor:213-k21} Let $f_k(z;213)$ be the generating function for the $a_{n,k}(213)$. Then for $k \geq 4$,
	\[ f_k(z;213) = \frac{z}{1-f_{k-1}(z;213)}\]
	with
	$f_1(z;213) = 0$, $f_2(z;213) = z$, and $f_3(z;213) =\frac{z(1-z)}{1-2z}$.
\end{corollary}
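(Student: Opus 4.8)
The plan is to translate the self-convolution recurrence of Theorem~\ref{thm:213-k21} directly into an identity among generating functions. Writing $f_k = f_k(z;213) = \sum_{n\geq 0} a_{n,k}(213) z^n$ and adopting the convention $a_{0,k}(213)=0$ (there is no length-$0$ cyclic permutation to count, which is consistent with the tabulated values and with the claimed base cases), I would first observe that the coefficient of $z^n$ in the product $f_k f_{k-1}$ is the full convolution $\sum_{i=0}^n a_{i,k}(213) a_{n-i,k-1}(213)$. The two boundary terms $i=0$ and $i=n$ vanish because $a_{0,k}(213)=a_{0,k-1}(213)=0$, so for every $n\geq 2$ this full convolution reduces to exactly the sum $\sum_{i=1}^{n-1} a_{i,k}(213) a_{n-i,k-1}(213)$ of Theorem~\ref{thm:213-k21}, which equals $a_{n,k}(213)$. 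Hence $[z^n](f_k f_{k-1}) = a_{n,k}(213)$ for all $n\geq 2$.

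Next I would reconcile the low-order terms, where the recurrence of Theorem~\ref{thm:213-k21} does not apply. At $n=0$ both sides vanish. At $n=1$, the product satisfies $[z^1](f_k f_{k-1}) = a_{0,k}(213)a_{1,k-1}(213)+a_{1,k}(213)a_{0,k-1}(213)=0$, whereas $a_{1,k}(213)=1$ for $k\geq 2$. This single discrepancy is precisely what produces the ``$z$'' in the final formula: assembling the coefficient identities gives
\[ f_k = \sum_{n\geq 0} a_{n,k}(213)\, z^n = a_{1,k}(213)\,z + \sum_{n\geq 2} [z^n](f_k f_{k-1})\,z^n = z + f_k f_{k-1}, \]
where in the last step I used that the $n=0$ and $n=1$ coefficients of $f_k f_{k-1}$ both vanish, so that $\sum_{n\geq 2}[z^n](f_k f_{k-1})\,z^n$ recovers the entire product $f_k f_{k-1}$. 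Solving the resulting linear equation $f_k\,(1-f_{k-1}) = z$ yields $f_k = z/(1-f_{k-1})$, valid for $k\geq 4$ exactly as required.

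I would then establish the three base cases by directly summing the known coefficient sequences rather than invoking the recurrence. From $a_{n,1}(213)=0$ for all $n$ we obtain $f_1=0$; from $a_{1,2}(213)=1$ and $a_{n,2}(213)=0$ for $n\geq 2$ we obtain $f_2=z$; and from $a_{1,3}(213)=1$ together with $a_{n,3}(213)=2^{n-2}$ for $n\geq 2$ we compute $f_3 = z + \sum_{n\geq 2} 2^{n-2}z^n = z + z^2/(1-2z) = z(1-z)/(1-2z)$, a routine geometric-series summation.

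The genuinely delicate point is the boundary bookkeeping, and that is the step I would verify most carefully. One must confirm that the excluded endpoints $i=0,n$ of the convolution really do vanish (this is exactly where the convention $a_{0,k}(213)=0$ is used) and that the off-by-one at $n=1$—where the recurrence fails and instead $a_{1,k}(213)=1$—is what contributes the numerator $z$. Getting this accounting precisely right is what converts the self-convolution of Theorem~\ref{thm:213-k21} into the clean recursion $f_k = z/(1-f_{k-1})$, which then unwinds into the height-$k$ continued fraction displayed in the introduction.
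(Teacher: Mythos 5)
Your proof is correct and is exactly the argument the paper leaves implicit: the corollary is stated as an immediate consequence of Theorem~\ref{thm:213-k21}, obtained by translating the self-convolution recurrence into the identity $f_k = z + f_k f_{k-1}$ and solving, with the base cases summed directly (your computation of $f_3$ and the boundary bookkeeping at $n=0,1$ are all right). Nothing is missing; one could at most remark that $1-f_{k-1}$ is invertible as a formal power series because $f_{k-1}$ has zero constant term, but this is routine.
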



Notice that since $f_3(z;213)$ can also be written as 
\[
f_3(z;213) =\frac{z}{1-\frac{z}{1-z}},
\]
the generating function described in Corollary~\ref{cor:213-k21} is a continued fraction generating function. For example, for $k=4$, 
\[f_4(z;213) = \frac{z}{1-f_3(z;213)}= \frac{z}{1-\frac{z}{1-\frac{z}{1-z}}} = z + z^2 + 2z^3 + 5z^4 + 13z^5 + 34z^6 + 89z^7+ \cdots,\]
where the coefficient of $z^n$ gives the number of cyclic permutations $\pi$ on $[n]$ that avoid $4321$ whose  cycle form $C(\pi)$ avoids 213. Similarly, for $k=5$,
\[f_5(z;213) = \frac{z}{1-f_4(z;213)}= \frac{z}{1-\frac{z}{1-\frac{z}{1-\frac{z}{1-z}}}} =z + z^2 + 2 z^3 + 5 z^4 + 14 z^5 + 41 z^6 + 122 z^7 + \cdots,\] where the coefficient of $z^n$ gives the number of cyclic permutations $\pi$ on $[n]$  that avoid $54321$ whose  cycle form $C(\pi)$ avoids 213.

\begin{remark}
	For $k\geq 4$, these sequences given by the recurrence and generating function in Theorem~\ref{thm:213-k21} and Corollary~\ref{cor:213-k21} can be found in OEIS A080934. It follows that $a_k(\delta_k;213)$ is equal to the number of permutations of length $n-1$ that classically avoid both 231 and $k(k-1)\ldots 21$, as well as the number of rooted ordered trees on $n$ notes of height at most $k-1$. 
\end{remark}

\subsection{Enumerating $\A_n(\delta_k;312)$}\label{sec:312}

We now briefly consider $\A_n(\delta_k; 312)$. Since $312$ is the reverse of 213, $C(\pi)$ avoids 213 if and only if $C(\pi)^r = (c_n, c_{n-1}, \ldots, c_2, 1)$ avoids 312. Furthermore, we must have $C(\pi^{-1})  = (1, c_n, c_{n-1}, \ldots, c_2),$ which also necessarily avoids 312 since the only change from $C(\pi)^r = (c_n, c_{n-1}, \ldots, c_2, 1)$ is that the 1 was moved to the front. Finally, since the inverse of the decreasing monotone pattern is itself, we have that $a_{n,k}(213) = a_{n,k}(312)$. We summarize these results in the following theorem.

\begin{theorem}
	For $n \geq 1$ and $k \geq 1$, $a_{n,k}(312) = a_{n,k}(213)$. Furthermore, if $f_k(z;312)$ is the generating function for $a_{n,k}(312)$, then $f_k(312;z) = f_k(213;z)$.
\end{theorem}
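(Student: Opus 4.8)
The plan is to exhibit a bijection $\A_n(\delta_k;213)\to\A_n(\delta_k;312)$ given by the inverse map $\pi\mapsto\pi^{-1}$, from which the numerical equality $a_{n,k}(312)=a_{n,k}(213)$ and hence the generating-function identity $f_k(z;312)=f_k(z;213)$ follow at once (termwise). First I would record the easy structural facts. The inverse of an $n$-cycle is again an $n$-cycle, so $\pi^{-1}$ is cyclic whenever $\pi$ is. Moreover, $\pi$ contains a pattern $\sigma$ in its one-line notation exactly when $\pi^{-1}$ contains $\sigma^{-1}$; since the decreasing pattern is self-inverse, $\delta_k^{-1}=\delta_k$, the map $\pi\mapsto\pi^{-1}$ preserves one-line avoidance of $\delta_k$.

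The crux is to verify that $C(\pi)$ avoids $213$ if and only if $C(\pi^{-1})$ avoids $312$. I would first compute $C(\pi^{-1})$ explicitly from $C(\pi)=(1,c_2,\ldots,c_n)$: reversing every arrow of the cycle sends $1\mapsto c_n\mapsto c_{n-1}\mapsto\cdots\mapsto c_2\mapsto 1$, so $C(\pi^{-1})=(1,c_n,c_{n-1},\ldots,c_2)$. Reading the word $1,c_2,\ldots,c_n$ backwards gives $c_n,\ldots,c_2,1$, and since reversal of a word turns an occurrence of $213$ into an occurrence of $312$ and conversely, $C(\pi)$ avoids $213$ if and only if the reversed word $c_n,\ldots,c_2,1$ avoids $312$.

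The remaining step is to move the entry $1$ from the end of $c_n,\ldots,c_2,1$ to the front, producing $C(\pi^{-1})=1,c_n,\ldots,c_2$, while preserving $312$-avoidance. Here I would use that $1$ is the globally smallest entry: in any occurrence of $312$ the smallest of the three chosen entries sits in the \emph{middle} position and the largest sits in the \emph{first} position. Consequently $1$ can never play the role of the ``$3$'' or the ``$2$'', so it can appear in a $312$ pattern only as the middle entry, which requires some entry to its right. When $1$ is the last letter it has no entry to its right, and when $1$ is the first letter any $312$ pattern through it would force $1$ into the first (largest) position, which is impossible. Hence both $c_n,\ldots,c_2,1$ and $1,c_n,\ldots,c_2$ avoid $312$ precisely when the core word $c_n,\ldots,c_2$ does, and the two are equivalent; combining with the previous paragraph gives the desired biconditional.

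I expect the subtle point, and the main obstacle, to be exactly this repositioning of the entry $1$: one must argue carefully that the smallest entry is inert with respect to $312$ at either extreme of the word, so that deleting it, reversing, and reinserting it at the opposite end neither creates nor destroys a pattern. Once the biconditional is in place, $\pi\mapsto\pi^{-1}$ is a well-defined map $\A_n(\delta_k;213)\to\A_n(\delta_k;312)$ whose inverse is again $\pi\mapsto\pi^{-1}$ (the entire argument is symmetric under interchanging $213$ and $312$). This is therefore a bijection, yielding $a_{n,k}(312)=a_{n,k}(213)$ for all $n\geq 1$ and $k\geq 1$, and hence $f_k(z;312)=f_k(z;213)$.
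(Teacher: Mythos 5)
Your proof is correct and takes essentially the same route as the paper: both use the inverse map $\pi\mapsto\pi^{-1}$, the fact that $312$ is the reverse of $213$ together with $\delta_k^{-1}=\delta_k$, and the observation that $C(\pi^{-1})=(1,c_n,\ldots,c_2)$ differs from the reversed word $(c_n,\ldots,c_2,1)$ only in the position of the entry $1$, which cannot participate in a $312$ pattern at either extreme. The only difference is one of detail; you spell out carefully the ``inertness'' of the entry $1$ that the paper dispatches in a single clause.
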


\subsection{Enumerating $\A_n(\delta_k;231)$}\label{sec:231}

In this section, we consider cyclic permutations that avoid $\delta_k$ in one-line notation and avoid 231 in cycle form. We again define $a_{n,k}(231) = |\A_n(\delta_k; 231)|$ with a summary of small values of $a_{n,k}(231)$ found in Figure~\ref{fig:table231}. Interestingly, this case will use the results in Section~\ref{sec:213}. There are multiple useful correspondences between permutations in $\A_n(\delta_k;231)$ and cyclic permutations whose cycles avoid 213. The following lemma formalizes some of these facts.

\begin{figure}\label{fig:table231}
	\begin{center}
		\begin{tabular}{c||*{9}{p{6.5mm}|}}
			\backslashbox{$k$}{$n$} & \multicolumn{1}{c|}{\textbf{1}} &  \multicolumn{1}{c|}{\textbf{2}}&  \multicolumn{1}{c|}{\textbf{3}} &  \multicolumn{1}{c|}{\textbf{4}}&  \multicolumn{1}{c|}{\textbf{5}} &  \multicolumn{1}{c|}{\textbf{6}} & \multicolumn{1}{c|}{\textbf{7}} &  \multicolumn{1}{c|}{\textbf{8}} &  \multicolumn{1}{c|}{\textbf{9}}\\ \hline \hline
			\textbf{2} & 1 & 0 & 0 & 0 & 0 & 0 & 0 & 0 & 0\\
			\textbf{3} & 1 & 1 & 2 & 3 & 4 & 5 & 6 & 7 & 8\\
			\bf 4 & 1 & 1 & 2 & 5 & 11 & 23 & 47 & 95 & 191\\
			\bf 5 & 1 & 1 & 2 & 5 & 14 & 40 & 113 & 314 & 860\\
			\bf 6 & 1 & 1 & 2 & 5 & 14 & 42 & 130 & 406 & 1267\\
			\bf 7 & 1 & 1 & 2 & 5 & 14 & 42 & 132 & 427 & 1403 \\
			\bf 8 & 1 & 1 & 2 & 5 & 14 & 42 & 132 & 429 & 1428\\
			\bf 9 & 1 & 1 & 2 & 5 & 14 & 42 & 132 & 429 & 1430\\
		\end{tabular} 
		\caption{The number of permutations in $\A_n(\delta_k;231)$; that is, the number of cyclic permutations of length $n$ avoiding $k(k-1)\cdots1$ in one-line notation and avoiding $231$ in cycle form.}
	\end{center}
\end{figure}

\begin{lemma}\label{lem:231-1} Let $n \geq 3$ and $k \geq 5$. Then
	\begin{enumerate}
		\item The number of permutations $\pi \in \A_n(\delta_k; 231)$ with $\pi_1 = n-1$ and $\pi_n = 1$ is equal to $a_{n-2, k-2}(213)$.  
		\item The number of permutations $\pi \in \A_n(\delta_k;231)$ with $\pi_1=n$ and $\pi_n=2$ is equal to $a_{n-2,k-2}(213).$
		\item The number of permutations $\pi \in \A_n(\delta_k;231)$ with $\pi_1=n$, $\pi_{n-1}=1$, and $\pi_{n} = n-2$ is $a_{n-3,k-2}(213).$
	\end{enumerate}
\end{lemma}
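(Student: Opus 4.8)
The plan is to establish a bijection in each of the three cases between the indicated subset of $\A_n(\delta_k;231)$ and a set of cyclic permutations whose cycle form avoids $213$, of the appropriate size, so that the count follows from the results of Section~\ref{sec:213}. The unifying idea is that avoiding $231$ in $C(\pi)$ constrains the cycle form heavily once we fix the first and last (and possibly second-to-last) one-line entries; these constraints should force the cycle form into a shape in which a contiguous block can be deleted, leaving a smaller cyclic permutation whose cycle form avoids $213$. So in each part I would first translate the one-line conditions on $\pi$ into structural conditions on $C(\pi) = (1,c_2,\ldots,c_n)$, then identify a deletion/relabeling operation $\pi \mapsto \pi'$ and verify it lands in $\A_{n-2}(\delta_{k-2};213)$ (or $\A_{n-3}(\delta_{k-2};213)$ in part~3), and finally check invertibility.

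For part~2, the condition $\pi_1 = n$ means $1 \mapsto n$, so $C(\pi)$ begins $(1,n,\ldots)$, and $\pi_n = 2$ means $n \mapsto 2$, so $C(\pi) = (1,n,2,c_4,\ldots,c_n)$; this is exactly the local shape appearing in the proof of Lemma~\ref{lem:n2s-k21}, so I would mirror that argument, deleting the entries $n$ and $2$ from the cycle and relabeling to obtain $C(\pi') = (1,c_4-1,\ldots,c_n-1)$. The work is to show that, under the hypothesis that $C(\pi)$ avoids $231$, the reduced permutation $\pi'$ has cycle form avoiding $213$ and one-line form avoiding $\delta_{k-2}$, with an analysis of whether a $\delta_{k-2}$ occurrence could use the element $1$ entirely parallel to Lemma~\ref{lem:n2s-k21}. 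Here I would need to exploit the interplay between $231$-avoidance in the full cycle and $213$-avoidance after deletion; I expect this to require checking that no new small patterns are created by the relabeling, which is the kind of bookkeeping that mirrors the earlier lemma.

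For part~1, the conditions $\pi_1 = n-1$ and $\pi_n = 1$ translate to $1 \mapsto n-1$ and $n \mapsto 1$, so $C(\pi) = (1,n-1,\ldots,n)$ with $n$ in the final cycle position (since $\pi(n)=1$ returns to the start). I would again delete two carefully chosen extreme values (the largest values $n-1$ and $n$, or $1$ and the appropriate second extreme) and relabel, arguing that $231$-avoidance of $C(\pi)$ forces a block structure making the reduction well-defined and its image a cyclic permutation in $\A_{n-2}(\delta_{k-2};213)$. Part~3 is the most delicate: the three conditions $\pi_1 = n$, $\pi_{n-1} = 1$, and $\pi_n = n-2$ pin down more of the cycle ($1 \mapsto n$, $(n-1) \mapsto 1$ via the position of $1$, and $n \mapsto n-2$), and I expect the reduction to remove three elements rather than two, yielding the smaller index $n-3$; the correct identification of which three values to delete and the verification that the relabeled cycle avoids $213$ is where the main obstacle lies.

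The main obstacle throughout will be verifying that $231$-avoidance of the \emph{full} cycle $C(\pi)$ is equivalent, after the deletion-and-relabel map, to $213$-avoidance of the reduced cycle, since the target pattern differs from the source pattern; this is not a mere restriction argument and requires understanding precisely how the fixed boundary entries interact with the interior of the cycle. I would handle this by showing that once the boundary values are fixed as above, the $231$-avoidance condition on $C(\pi)$ is equivalent to a $213$-avoidance condition on the interior block, so that deleting the boundary and relabeling preserves exactly the needed property. The verification that $\delta_{k-2}$-avoidance in one-line notation is preserved should follow the same template as in Lemma~\ref{lem:n2s-k21}, namely by showing any putative $\delta_{k-2}$ occurrence using a deleted or boundary element can be replaced by one avoiding it, using the fixed cycle structure to rule out the relevant $213$ or $231$ configurations.
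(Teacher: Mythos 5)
There is a genuine gap, and it sits exactly where you predicted the ``main obstacle'' would be: your proposed resolution of that obstacle is false. You claim that once the boundary values are fixed (e.g.\ $C(\pi)=(1,n,2,c_4,\ldots,c_n)$ in part~2), the $231$-avoidance of $C(\pi)$ becomes \emph{equivalent} to $213$-avoidance of the interior block, so that deleting the boundary entries and relabeling lands in $\A_{n-2}(\delta_{k-2};213)$. It does not: deleting entries from a $231$-avoiding cycle always yields a $231$-avoiding cycle, and for these boundary shapes the extremal entries $1$, $n$, $2$ cannot participate in any $231$ pattern, so the condition on the interior block is precisely $231$-avoidance --- nothing forces it to avoid $213$. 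Concretely, take $\pi=645312$, so $C(\pi)=(1,6,2,4,3,5)$. This cycle avoids $231$, the one-line form avoids $\delta_5$, and $\pi_1=6$, $\pi_6=2$, so $\pi$ is one of the permutations that part~2 counts (with $n=6$, $k=5$). Your map deletes $6$ and $2$ and relabels, producing $C(\pi')=(1,3,2,4)$, which \emph{contains} the pattern $213$ (namely $3,2,4$); so your map does not even land in the claimed target set. Nor can the discrepancy be waved away at the level of counting, since the classes your argument would conflate genuinely differ in size: $a_{5,4}(231)=11$ while $a_{5,4}(213)=13$.

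The missing idea is complementation. Since $231^c=213$ and $\delta_k^{rc}=\delta_k$, the reverse-complement $\pi\mapsto\pi^{rc}$ (which complements the cycle form entrywise) converts cycle-$231$-avoidance into cycle-$213$-avoidance while preserving avoidance of the decreasing pattern; this symmetry is the bridge the paper uses in all three parts. For part~1 no deletion is needed at all: $\pi\mapsto\pi^{rc}$ bijects the set in question with the permutations in $\B_n(\delta_k;213)$ ending in $2$, which Lemma~\ref{lem:n2s-k21} counts as $a_{n-2,k-2}(213)$. For part~2 one complements $C(\pi)$ and deletes $n$, landing in $\B_{n-1}(\delta_{k-1};213)$, and then Lemma~\ref{lem:k21-a=b} gives $b_{n-1,k-1}(213)=a_{n-2,k-2}(213)$. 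For part~3 one deletes $1$, $n$, $n-2$ from $C(\pi)$ and then complements, and the drop from $\delta_k$ to $\delta_{k-2}$ comes from observing that any maximal decreasing sequence in $\pi$ must use $n$ and one of $1$ or $n-2$. Your overall architecture (translate the one-line conditions into cycle shapes, delete, relabel, check invertibility and the index drop) is the right skeleton, but without inserting the complement step each of your three maps targets the wrong pattern class, and no amount of bookkeeping about which elements a $\delta$-pattern can use will repair that.
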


\begin{proof}
	We will find a one-to-one correspondence between permutations $\pi \in \A_n(\delta_k; 231)$ with $\pi_1 = n-1$ and $\pi_n = 1$ and permutations  $\pi' \in \A_n(\delta_k;213)$ with the additional constraints that $\pi'_1 = n$ and $\pi'_n=2$.  When this correspondence is found, Lemma~\ref{lem:n2s-k21} can be used to show that the number of desired permutations is then $a_{n-2,k-2}(213)$.
	
	To that end, let $\pi \in \A_n(\delta_k;231)$ with $\pi_1 = n-1$ and $\pi_n =1$. In cycle form, we have
	\[ C(\pi) = (1, n-1, c_3, c_4, \ldots, c_{n-1}, n).\]
	Consider the permutation $\pi'=\pi^{rc}$. Since $\pi_1=n-1$ and $\pi_n=1$, we have $\pi'_1 = n$ and $\pi'_n=2.$ Furthermore, the cycle notation of $\pi'$ is complemented, and thus is given by
	\[ C(\pi') = (1, n, 2, n+1-c_3, n+1-c_4, \ldots, n+1-c_{n-1}).\]
	Thus $C(\pi')$ avoids 213, and $\pi'$ avoids $\delta_k$. Because this process is invertible, the result holds.
	
	Now suppose $\pi \in \A_n(\delta_k;231)$ with $\pi_1=n$ and $\pi_n=2$. Then
	\[ C(\pi) = (1, n, 2, c_4, c_5, \ldots, c_{n})\]
	and
	\[ \pi = n\pi_2\pi_3\cdots\pi_{n-1}2.\]  
	
	Let us define $\pi''$ by complementing $C(\pi)$ and deleting $n$, so that \[C(\pi'') = (1, n-1, n+1-c_4, n+1-c_5, \ldots, n+1-c_{n}).\]  We claim that $\pi''\in\B_{n-1}(\delta_k;213)$, and in fact all such elements in $\B_{n-1}(\delta_{k-1};213)$ can be obtained this way.  First, it is clear that $C(\pi'')$ avoids 231 since the elements after $n-1$ must avoids 231. Next, let us notice that the permutation $\pi^{rc}$, obtained by complementing the cycle form of $\pi$, is given by $\pi^{rc}=(n-1)(n+1-\pi_{n-1})\ldots(n+1-\pi_2)1$ avoids $\delta_k$. The relationship between $\pi^{rc}$ and $\pi''$ can be seen as follows. If $\pi^{rc}_\ell=n,$ then $\pi''_i = \pi^{rc}_i$ for all $i\in[n-1]\setminus\{\ell\}$ and $\pi''_\ell=1$. 
	
	To see that $\pi''$ does not contain the pattern $\delta_{k-1}$, we use contradiction. If if it did have a $\delta_{k-1}$, then by the proof of Lemma~\ref{lem:n2s-k21}, there is a $\delta_{k-1}$ pattern that does not use 1. This pattern would then also appear in $\pi^{rc},$ and together with the 1 at the end of $\pi^{rc},$ we would have a $\delta_k$ pattern in $\pi^{rc},$ which is a contradiction.  Thus $\pi''$ avoids $\delta_{k-1}$ if and only if $\pi^{rc}$ avoids $\delta_k.$ This, together with the fact that $a_{n-2, k-2}(213) = b_{n-1,k-1}(213)$ by Lemma~\ref{lem:k21-a=b} implies the second bullet point in the theorem.
	
	
	
	Finally, suppose $\pi \in \A_n(\delta_k;231)$ with $\pi_1=n$, $\pi_{n-1}=1$, and $\pi_{n} = n-2$. Then
	\[ C(\pi) = (1, n,n-2,c_4, c_5, \ldots, c_{n-1}, n-1)\] and \[ \pi = n\pi_2\cdots \pi_{n-2}1(n-2).\]
	In this case, consider the permutation $\pi'''$ formed by deleting $1$, $n$, and $n-2$ from $C(\pi)$ and then taking the complement. In other words,
	\[ C(\pi''') 
	= (1, n-1-c_4, n-1-c_5, \ldots, n-1-c_{n-1}). \]
	We note that $C(\pi''')$ is 213-avoiding since $C(\pi)$ is 231-avoiding and $231^c = 213$. In one-line notation, $\pi'''$ is formed by deleting $1$, $n$, and $n-2$ from the one-line notation of $\pi$ and then taking the reverse complement.  Thus,
	\[ \pi''' = (n-2-\pi_{n-2})(n-2-\pi_{n-3})\cdots(n-2-\pi_{2}).\]
	Since $\delta_k^{rc} = \delta_k$, we see that $\pi'''$ clearly avoids $\delta_k.$ However, any maximal decreasing sequence in $\pi$ must use $n$ and either $1$ or $n-2$, so $\pi'''$ avoids $\delta_{k-2}$ and $\pi''' \in \A_{n-3}(\delta_k; 213)$. Because this process is invertible, we have the desired result.
\end{proof}

We now illustrate the correspondences described in Lemma~\ref{lem:231-1} with some examples.

\begin{example}
	First, consider the permutation \[\pi = 11 \ 4 \ 2 \ 9 \ 3 \ 5 \ 6 \ 7 \ 10 \ 12 \ 8 \ 1\quad \text{ with }\quad C(\pi)=(1, 11, 8, 7, 6, 5, 3, 2, 4, 9, 10, 12)\]  in $\A_{12}(54321; 231)$ corresponding to the first bullet of Lemma~\ref{lem:231-1}. We can obtain \[\pi'=\pi^{rc}= 11\  5\ 1\ 3\ 6\ 7\ 8\ 10\ 4\ 11\ 9\ 2 \quad\text{ with }\quad C(\pi') = (1,12,2,5,6,7,8,10,11,9,4,3)\] which is in $\A_{12}(54321;213)$ and has the property that $\pi'_1=12$ and $\pi'_{12}=2$.
	
	Now, consider the permutation 
	\[\pi =12\ 8\ 9\ 3\ 4\ 5\ 6\ 7\ 10\ 11\ 1\ 2 \quad \text{ with }\quad C(\pi) =(1, 12, 2, 8, 7, 6, 5, 4, 3, 9, 10, 11)\] in $\A_{12}(54321;231)$  corresponding to the second bullet of Lemma~\ref{lem:231-1}. We can obtain $\pi''$ by complementing $C(\pi)$ and deleting $n=12$, obtaining
	\[
	\pi'' = C(\pi) =11\ 1\ 2\ 3\ 6\ 7\ 8\ 9\ 10\ 4\ 5 =(1,11,5,6,7,8,9,10,4,3,2)
	\]
	which is an element of $\B_{11}(4321;213)$.
	
	Finally, consider the permutation \[\pi = 83527416 \quad \text{ with } \quad C(\pi)= (1,8,6,4,2,3,5,7) \in \A_8(\delta_6; 231)\] corresponding to the third bullet of Lemma~\ref{lem:231-1}. Following the proof of Lemma~\ref{lem:231-1}, we create the corresponding permutation $\pi''' \in \A_5(\delta_4;213)$ by deleting $1,$ $8$, and $6$ from $\pi$ and then taking the reverse-complement. In this case, deleting the appropriate values yields $35274$ or $24153.$ The reverse-complement is then $\pi''' = 31524 = (1,3,5,4,2)$ which is in $\A_{5}(\delta_4;213).$
\end{example}

As in Section~\ref{sec:213}, we continue by defining $\B_n(\delta_k;231)$ to be the set of permutations in $\A_{n}(\delta_k;231)$ with the additional property that $\pi_1=n$, and we let $b_{n,k}(231) = |\B_n(\delta_k;231)|$. We can show that any permutation in $\A_{n,k}(231)$ that is \emph{not} in $\B_n(\delta_k;231)$ can be decomposed into two unique permutations based on the value of $\pi_1$. If $\pi_1=j$, one of the permutations in the decomposition is in $\A_{j+1}(\delta_k;231)$ with some additional constraints while the other is in $\A_{n+1-j}(\delta_k;231).$ The subsequent lemma enumerates permutations in $\A_{n}(\delta_k;231)$ based on this decomposition. Notice that the recurrence in the next lemma also depends on the values of $a_{n,k}(213)$ found in the previous section.

\begin{lemma}\label{lem:as-k21-3} 
	For $n\geq 3$ and $k\geq 5$, \[a_{n,k}(231) = b_{n,k}(231)+ \sum_{j=2}^{n-1} a_{j-1,k-2}(213)a_{n+1-j,k}(231).\]
\end{lemma}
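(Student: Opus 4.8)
The statement expresses $a_{n,k}(231)$ as the count $b_{n,k}(231)$ of those $\pi$ with $\pi_1=n$, plus a convolution over the remaining permutations, which have $\pi_1=j$ for some $j\in[2,n-1]$. The plan is to establish a bijection: every $\pi\in\A_n(\delta_k;231)$ with $\pi_1=j<n$ decomposes uniquely into a pair consisting of a constrained permutation counted by $a_{j-1,k-2}(213)$ and a permutation $\pi'\in\A_{n+1-j}(\delta_k;231)$. Since the permutations with $\pi_1=n$ are exactly those counted by $b_{n,k}(231)$ and are not part of the convolution, summing the contribution $a_{j-1,k-2}(213)\,a_{n+1-j,k}(231)$ over $j=2,\dots,n-1$ and adding $b_{n,k}(231)$ yields the claimed recurrence.

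First I would fix $j\in[2,n-1]$ and analyze the structure forced by $\pi_1=j$ together with the avoidance conditions. Writing the cycle form as $C(\pi)=(1,j,c_3,\dots,c_n)$ and using that $C(\pi)$ avoids $231$, I would determine how the values $[2,j-1]$ and $[j+1,n]$ are distributed among the $c_i$; this is the $231$-analogue of the partition of positions used in Lemma~\ref{lem:as-k21} for the $213$ case, and I expect the $231$-avoidance to force the elements larger than $j$ to appear in a controlled block relative to the smaller ones. From this structure I would read off the one-line notation and argue, exactly as in the earlier lemmas, that any occurrence of $\delta_k$ in $\pi$ must lie entirely among the ``small'' elements $\{1,\dots,j\}$ (together with any displaced element) or entirely among the ``large'' elements. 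That localization is what permits splitting $\pi$ into two independent pieces.

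Next I would define the two factors of the decomposition. The piece built from the large elements should, after standardizing, lie in $\A_{n+1-j}(\delta_k;231)$, contributing the factor $a_{n+1-j,k}(231)$; the piece built from the small elements is a constrained permutation on roughly $j$ elements. The key is to recognize this constrained small piece as one of the objects enumerated in Lemma~\ref{lem:231-1}, so that its count is $a_{j-1,k-2}(213)$ rather than something new. Concretely, I would match the small block to the permutations of part (1) or (2) of Lemma~\ref{lem:231-1}—those with $\pi_1$ maximal and a prescribed final value—whose number is $a_{j-1,k-2}(213)$, thereby importing the $213$-enumeration from the previous section. I would then check the map is invertible: given any admissible pair, reassembling the cycle forms by shifting the large block up by $j-1$ and appending the small block reconstructs a unique $\pi\in\A_n(\delta_k;231)$ with $\pi_1=j$ that avoids $\delta_k$ (since neither block did) and whose cycle avoids $231$.

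The main obstacle I anticipate is verifying that the small block is counted by $a_{j-1,k-2}(213)$ with the correct index shift, i.e.\ that it really coincides with the constrained family from Lemma~\ref{lem:231-1} rather than an off-by-one variant; the drop from $\delta_k$ to $\delta_{k-2}$ must be justified by showing every maximal decreasing run in the small block is forced to use two extremal elements (the largest value and either $1$ or the displaced element), exactly as in the third bullet of that lemma. A secondary subtlety is confirming that the decomposition cleanly excludes the case $j=n$: when $\pi_1=n$ there is no nonempty large block to peel off, so those permutations cannot be produced by the convolution and must be accounted for separately by the standalone term $b_{n,k}(231)$. Handling these index and boundary details carefully is where the real work lies; the bijective skeleton itself parallels the $213$ arguments already established.
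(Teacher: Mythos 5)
Your proposal takes essentially the same approach as the paper: split off the $\pi_1=n$ permutations as $b_{n,k}(231)$, and for $\pi_1=j<n$ use the 231-avoidance of $C(\pi)$ to force the small values to precede the large ones in the cycle, decompose $\pi$ into a large block in $\A_{n+1-j}(\delta_k;231)$ and a small block (on $j+1$ values, beginning with $j$ and ending with $1$) that is precisely the constrained family of Lemma~\ref{lem:231-1}, counted by $a_{j-1,k-2}(213)$, and then check invertibility. One small correction to your hedge: the relevant family is part (1) of that lemma---the small block begins with its \emph{second}-largest value $j$, since the displaced large element standardizes to the maximum $j+1$---and the drop from $\delta_k$ to $\delta_{k-2}$ comes for free from citing that lemma rather than needing a separate maximal-decreasing-run argument.
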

\begin{proof}
	Notice that since $b_{n,k}(231)$ is the number of permutations $\pi \in \A_{n}(\delta_k; 213)$ with the property that $\pi_1=n$, we need only consider the permutations in $\A_{n,k}(231)$ that do not begin with $n$.
	Let $j \in [2,n-1]$. We show that the number of permutations in $\A_n(\delta_k;231)$ where $\pi_1 = j<n$ is equal to $ a_{j-1,k-2}(213)a_{n+1-j,k}(231)$. To this end, let $\pi \in \A_n(\delta_k;231)$ and suppose that $\pi_1=j$. Since $C(\pi)$ avoids 231, we can write \[
	C(\pi) = (1,j, c_3, c_4, \ldots, c_{j},c_{j+1},\ldots, c_{n})
	\]
	where $\{c_3,\ldots, c_{j}\} = [2,j-1]$ and $\{c_{j+1},\ldots, c_{n}\} = [j+1,n]$, which is nonempty. In one-line form, we then have 
	\[\pi = j\pi_2\pi_3\ldots\pi_{j}\pi_{j+1}\ldots \pi_n\] where $\{\pi_2,\ldots,\pi_{j}\} = [2,j-1]\cup\{c_{j+1}\}$ and $\{\pi_{j+1},\ldots,\pi_{n}\} = [j+1,n]\setminus\{c_{j+1}\}\cup\{1\}$. Notice that this implies that if $\pi$ did contain $\delta_k$ as a pattern, then the $\delta_k$ pattern must  be composed of only elements of $[1,j]\cup\{c_{j+1}\}$ or only elements of $[j+1,n]\cup\{1\}$.
	
	We now create two cyclic permutations from $\pi$ that both avoid $\delta_k$.  Define $\pi'$ to be the permutation formed by deleting the elements in $[2,j]$ from $C(\pi)$. More formally, \[C(\pi') = (1,c_{j+1}-j+1,\ldots, c_{n}-j+1).\] 
	Equivalently, in one-line notation, $\pi'$ can be thought of as deleting the elements in $[2,j]=\{\pi_1, \pi_2, \cdots, \pi_{j}\}\setminus\{\pi_{c_{j}}\}$ from $\pi$. Thus $\pi'$ is cyclic with $C(\pi')$ avoiding $231$ and $\pi'$ avoids $\delta_k$ in one-line notation, and thus $\pi' \in \A_{n+1-j}(\delta_k; 231)$.

	Similarly, define $\pi''$ to be the permutation formed by deleting the elements in $[j+1,n]\setminus\{c_{j+1}\}$ from $C(\pi)$. In this case, \[C(\pi'') = (1, j, c_{3},\ldots, c_{j}, j+1) \] and
	\[ \pi'' = j\pi_2\pi_3\cdots\pi_{j-1}\pi_j1\] where we take $\pi_{c_{j}}=j+1$.  We again see that $\pi''$ is cyclic, its cycle form avoids 231, and its one-line form avoids $\delta_k$. The permutation $\pi''\in \A_{j+1}(\delta_k; 231)$ has the additional conditions that $\pi_1=j$ and $\pi_{j+1}=1$. By Lemma~\ref{lem:231-1}, the number of such permutations is equal to $a_{j-1,k-2}(213)$.
\end{proof}

\begin{example}
	Consider the permutation
	\[5\ 8\ 2\ 3\ 4\ 9\ 6\ 7\ 11\ 1\ 10= (1, 5, 4, 3, 2, 8, 7, 6, 9, 11, 10)\] in $\A_{11}(4321;231).$ Then, we can obtain the permutations $\pi'= 4523671=(1, 4, 3, 2, 5, 6, 7)$ and $\pi''=562341=(1,5,4,3,2,6)$ as described in the preceding proof and
	as shown in Figure~\ref{fig:231-k21-1}.
\end{example}

\begin{figure}[htp]
	\centering
	\resizebox{\textwidth}{!}{
		\begin{tabular}{c}
			$\pi=5\ 8\ 2\ 3\ 4\ 9\ 6\ 7\ 11\ 1\ 10$\\
			$=(1, 5, 4, 3, 2, 8, 7, 6, 9, 11, 10)$ \\
			\begin{tikzpicture}[scale=.5]
				\newcommand\myx[1][(0,0)]{\pic at #1 {myx};}
				\tikzset{myx/.pic = {\draw [ultra thick]
						(-2.5mm,-2.5mm) -- (2.5mm,2.5mm)
						(-2.5mm,2.5mm) -- (2.5mm,-2.5mm);}}
				\draw[gray] (0,0) grid (11,11);
				\foreach \x/\y in {
					1/5,
					2/8,
					3/2,
					4/3,
					5/4,
					6/9,
					7/6,
					8/7,
					9/11,
					10/1,
					11/10
				}
				\myx[(\x-.5,\y-.5)];
				\draw[-, ultra thick,red!70!black] (.5,.5)--(.5,4.5)
				--(4.5,4.5) -- (4.5,3.5)--(3.5,3.5)--(3.5,2.5)
				--(2.5,2.5)--(2.5,1.5)--(1.5,1.5)--(1.5,7.5)--(7.5,7.5);
				\draw[-, ultra thick,red!70!black] (9.5,9.5)--(9.5,.5)--(.5,.5);
				\draw[-, ultra thick,green!70!black] (1.5,7.5)--
				(7.5,7.5)--(7.5,6.5)
				--(6.5,6.5)--(6.5,5.5)
				--(5.5,5.5)--(5.5,8.5)
				--(8.5, 8.5)--(8.5,10.5)
				--(10.5,10.5)--(10.5,9.5)
				--(9.5,9.5)--(9.5,.5);
			\end{tikzpicture}
		\end{tabular}
		\begin{tabular}{ccc}
			&$\pi' = 4 \ 5 \ 2 \ 3 \ 6 \ 7 \ 1$ & $\pi''=5 \ 6 \  2 \ 3 \ 4 \ 1$ \\ 
			&$=(1,4,3,2,5,6,7)$ & $=(1,5,4,3,2,6)$ \\
			$\longleftrightarrow$& \begin{tabular}{c}
				\begin{tikzpicture}[scale=.5]
					\newcommand\myx[1][(0,0)]{\pic at #1 {myx};}
					\tikzset{myx/.pic = {\draw [ultra thick]
							(-2.5mm,-2.5mm) -- (2.5mm,2.5mm)
							(-2.5mm,2.5mm) -- (2.5mm,-2.5mm);}}
					\draw[gray] (0,0) grid (7,7);
					\foreach \x/\y in {
						1/4,
						2/5,
						3/2,
						4/3,
						5/6,
						6/7,
						7/1
					}
					\myx[(\x-.5,\y-.5)];
					\draw[-, ultra thick,green!70!black] (.5,.5)--(.5,3.5)
					--(3.5,3.5)--(3.5,2.5)
					--(2.5,2.5)--(2.5,1.5)
					--(1.5,1.5)--(1.5,4.5)
					--(4.5,4.5)--(4.5,5.5)
					--(5.5,5.5)--(5.5,6.5)
					--(6.5,6.5)--(6.5,.5)
					--(.5,.5);
			\end{tikzpicture} \end{tabular}&
			\begin{tabular}{c}
				\begin{tikzpicture}[scale=.5]
					\newcommand\myx[1][(0,0)]{\pic at #1 {myx};}
					\tikzset{myx/.pic = {\draw [ultra thick]
							(-2.5mm,-2.5mm) -- (2.5mm,2.5mm)
							(-2.5mm,2.5mm) -- (2.5mm,-2.5mm);}}
					\draw[gray] (0,0) grid (6,6);
					\foreach \x/\y in {
						1/5,
						2/6,
						3/2,
						4/3,
						5/4,
						6/1
					}
					\myx[(\x-.5,\y-.5)];
					\draw[-, ultra thick,red!70!black] (.5,.5)--(.5,4.5)
					--(4.5,4.5)--(4.5,3.5) -- (3.5,3.5)--(3.5,2.5)
					--(2.5,2.5)--(2.5,1.5)
					--(1.5,1.5)--(1.5,5.5)
					--(5.5, 5.5)--(5.5,.5)--(.5,.5);
				\end{tikzpicture}
			\end{tabular}
		\end{tabular}
	}
	\caption{The permutation $\pi\in\A_{11}(4321;213)$ with $\pi_{1}=5$ can be decomposed into a permutation $\pi'~\in~\A_7(4321;231)$ and $\pi''\in\A_6(4321;231)$ with some additional constraints.}
	\label{fig:231-k21-1}
\end{figure}

We now consider permutations in $\B_n(\delta_k; 231)$ and enumerate these based on cyclic permutations that avoid 213 in cycle form and $\delta_{k-2}$ in one-line notation.

\begin{lemma}\label{lem:as-k21-2} 
	For $n\geq 3$ and $k\geq 5$, \[b_{n,k}(231) = b_{n-1,k}(231)+ \sum_{j=2}^{n-2} a_{j-1,k-2}(213)a_{n-j,k-2}(213).\]
\end{lemma}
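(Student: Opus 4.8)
The plan is to classify each $\pi\in\B_n(\delta_k;231)$ by the value $j:=\pi_n$ of its last one-line entry; since $C(\pi)=(1,n,c_3,\dots,c_n)$ and $n\mapsto c_3$, this is the same as recording $j=c_3=\pi(n)$, and $j$ ranges over $\{2,\dots,n-1\}$. The term $b_{n-1,k}(231)$ will come from the single value $j=n-1$, and the convolution $\sum_{j=2}^{n-2}$ from the remaining values, exactly as the two summands of the claimed recurrence suggest. Throughout I will use the structural fact that, because $1$ and $n$ sit at the front of $C(\pi)$ and are the global extremes, neither participates in a $231$ pattern, so $(c_3,\dots,c_n)$ is a $231$-avoiding arrangement of $\{2,\dots,n-1\}$; moreover, writing $j=c_3$, $231$-avoidance forces every element smaller than $j$ to precede every element larger than $j$ (otherwise $j$, the large letter, and the small letter form a $231$). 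This gives the canonical form $C(\pi)=(1,n,j,X,Y)$ where $X$ is a $231$-avoiding arrangement of $[2,j-1]$ and $Y$ a $231$-avoiding arrangement of $[j+1,n-1]$.

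For the case $j=n-1$ we have $Y=\emptyset$ and $C(\pi)=(1,n,n-1,X)$. First I would delete the letter $n$ from the cycle, obtaining $C(\pi^{-})=(1,n-1,X)$, a permutation of $[n-1]$ with $\pi^{-}_1=n-1$, i.e.\ $\pi^{-}\in\B_{n-1}(\delta_k;231)$; inserting $n$ immediately after $1$ inverts this. Since in one-line notation $\pi=n\,M\,(n-1)$ and $\pi^{-}=(n-1)\,M$ for the common block $M=\pi_2\cdots\pi_{n-1}$ (whose values are $\{1,\dots,n-2\}$), a short computation shows the longest decreasing subsequence is unchanged: the trailing $n-1$ in $\pi$ can only extend the decreasing run $n,(n-1)$, which never beats the run starting at the leading maximum. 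Hence $\pi$ avoids $\delta_k$ iff $\pi^{-}$ does, and this regime contributes exactly $b_{n-1,k}(231)$.

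For $2\le j\le n-2$ I would decompose $\pi$ into two cyclic permutations, mirroring the proof of Lemma~\ref{lem:as-k21-3}. The low piece $\pi''$ is obtained by deleting $n$ together with all of $Y$ except its first letter, then relabelling that surviving letter as $j+1$; this yields $C(\pi'')=(1,j,X,j+1)$, so $\pi''\in\A_{j+1}(\delta_k;231)$ with $\pi''_1=j$ and $\pi''_{j+1}=1$, which by part (1) of Lemma~\ref{lem:231-1} is counted by $a_{j-1,k-2}(213)$. The high piece $\pi'$ is obtained by deleting $X$ (that is, all of $[2,j-1]$) and relabelling $j$ as $2$; this yields $C(\pi')=(1,n{-}j{+}2,2,Y')$, so $\pi'\in\B_{n-j+2}(\delta_k;231)$ with last entry $2$, which by part (2) of Lemma~\ref{lem:231-1} is counted by $a_{n-j,k-2}(213)$. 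Since $X$ and $Y$ may be chosen independently and the retained connectors record precisely how to reglue the blocks $1,n,j,X,Y$ into a single cycle, this should set up a bijection between the permutations of this regime with $\pi_n=j$ and pairs in the corresponding product, giving the summand $a_{j-1,k-2}(213)\,a_{n-j,k-2}(213)$.

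The hard part will be the $\delta_k$ bookkeeping: I must show that $\pi$ avoids $\delta_k$ if and only if both $\pi'$ and $\pi''$ do, so that the two factors are genuinely independent. As in Lemmas~\ref{lem:n2s-k21} and~\ref{lem:as-k21-3}, the key is that any occurrence of $\delta_k$ in $\pi$ must lie entirely among the low letters $[1,j]$ (with the connector) or entirely among the high letters $[j+1,n]$ (with $1$): the value gap between the two blocks, together with the fixed positions of $n$ (leading) and of $1$ (forced by the cycle), prevents a decreasing subsequence from profitably threading from one block into the other. Establishing this confinement cleanly, and then checking that the reglueing map is well defined and invertible in every boundary case (e.g.\ $X=\emptyset$ when $j=2$, or $|Y|=1$ when $j=n-2$), is where the real work lies; once it is in place, summing the $j=n-1$ contribution and the $2\le j\le n-2$ contributions yields the stated recurrence, with the reduction of the pattern length from $k$ to $k-2$ inherited entirely from Lemma~\ref{lem:231-1}.
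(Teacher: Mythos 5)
Your overall plan coincides with the paper's: classify $\pi\in\B_n(\delta_k;231)$ by $j=\pi_n$, handle $j=n-1$ by deleting one letter, and for $2\le j\le n-2$ split the cycle $(1,n,j,X,Y)$ into a low piece and a high piece counted via Lemma~\ref{lem:231-1}. Your $j=n-1$ case and your high piece $(1,n-j+2,2,Y')$ are exactly the paper's. The one place you deviate is the low piece: you delete $n$ and keep $(1,j,X,j+1)$, invoking part (1) of Lemma~\ref{lem:231-1}, whereas the paper retains $n$, forming $(1,j+2,j,X,j+1)$, and invokes part (3). That deviation is precisely where your proof has a genuine gap.

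The gap is that your confinement claim --- every occurrence of $\delta_k$ in $\pi$ lies in $[1,j]\cup\{c_{j+2}\}$ or in $\{1\}\cup[j+1,n]$ --- is false: a decreasing subsequence can thread through $n$, then the connector $c_{j+2}$ (which sits among the low positions), then low letters, then $1$; such an occurrence uses $n$ together with low letters and lies in neither of your two sets. Since your low piece discards $n$, an occurrence of this type loses a letter when passed to your pieces, and the backward direction of your bijection (gluing two $\delta_k$-avoiders yields a $\delta_k$-avoider) is exactly what fails. Concretely, take $n=8$, $j=6$, $X=(5,4,3,2)$, $Y=(7)$, so $C(\pi)=(1,8,6,5,4,3,2,7)$ and $\pi=8\,7\,2\,3\,4\,5\,1\,6$. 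Then $\pi$ contains $\delta_4$ (for instance $8,7,2,1$), yet your low piece, with cycle $(1,6,5,4,3,2,7)$ and one-line form $6\,7\,2\,3\,4\,5\,1$, and your high piece, with cycle $(1,4,2,3)$ and one-line form $4\,3\,1\,2$, both avoid $\delta_4$. This example has $k=4$, outside the lemma's stated range, but your argument for the equivalence never uses $k\ge 5$, so it cannot be complete as written; the statement you need (for $k\ge 5$, $\pi$ avoids $\delta_k$ if and only if both of your pieces do) is true, but it requires a nontrivial argument --- e.g., one can note that the set of $X$ for which the paper's low piece avoids $\delta_k$ is contained in the set for which yours does, and that by parts (3) and (1) of Lemma~\ref{lem:231-1} both sets have cardinality $a_{j-1,k-2}(213)$, hence coincide. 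The paper sidesteps all of this by keeping $n$ in the low piece: then both pieces are literal patterns (subsequences) of $\pi$, every decreasing subsequence of $\pi$ lies inside the letter set of one of the two pieces, and the desired equivalence is immediate for every $k$.
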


\begin{proof} We will count permutations $\pi \in \B_n(\delta_k; 231)$ based on the value of $\pi_n$. We first show that the number with $\pi_n = n-1$ is $b_{n-1,k}(231).$  Let $\pi \in \B_n(\delta_k; 231)$ with $\pi_n=n-1$. Then
	\[ C(\pi) = (1,n,n-1,c_4, \ldots, c_n).\] Let $\pi'$ be the permutation formed by deleting $n-1$ from $\pi$. Then $\pi'$ clearly avoids $\delta_k$ if and only if $\pi$ does. Also, the cycle notation of $\pi'$ is
	\[ C(\pi') = (1, n-1, c_4, \ldots, c_n)\]
	and thus, $\pi' \in \B_{n-1}(\delta_k; 231)$. Because this process is invertible, there are $b_{n-1, k}$ such permutations as desired.
	
	Now suppose $\pi \in \B_n(\delta_k; 231)$ with $\pi_n=j$ for some $j \in [2,n-2]$. Then
	\[ C(\pi) = (1,n,j,c_4, c_5, \ldots, c_{j+1}, c_{j+2}, \ldots, c_n)\] where $\{c_4, \ldots, c_{j+1}\} = [2, j-1]$ and $\{c_{j+2}, \ldots, c_n\} = [j+1, n-1].$ In one-line form, we then have
	\[ \pi = n\pi_2\pi_3\cdots\pi_j\pi_{j+1}\cdots\pi_{n-1}j\]
	where $\{\pi_2,\ldots,\pi_j\} = [2,j-1]\cup\{c_{j+2}\}$ and $\{\pi_{j+1},\ldots, \pi_{n-1}\} = [j+1,n-1] \setminus \{c_{j+2}\} \cup \{1\}$. We now create two cyclic permutations from $\pi$ that both avoid $\delta_k.$
	
	
	Let $\pi'$ be the permutation formed by deleting the elements in $\{c_{j+3}, c_{j+4}, \ldots, c_n\}$ from $C(\pi).$ Thus,
	\[ C(\pi') = (1, j+2, j, c_4, \ldots, c_{j+1}, j+1),\] and in one-line form
	\[ \pi' = (j+2)\pi_2\pi_3\cdots\pi_{j}1j.\] 
	Notice that $\pi'$ avoids $\delta_k$ and that $C(\pi')$ avoids 231. By Lemma~\ref{lem:231-1}, there are $a_{j-1,k-2}(213)$ such permutations.

	We form the second permutation in the decomposition, ${\pi}''$, by deleting $\{c_4, \ldots, c_{j+1}\}$ from $C(\pi).$ Thus,
	\[ C(\pi'') = (1,n-j+2,2,c_{j+2}-j+2,c_{j+3}-j+2,\ldots, c_n-j+2)\] and
	\[ \pi'' = (n-j+2)(c_{j+2}-j+2)(\pi_{j+1}-j+2)(\pi_{j+2}-j+2)\cdots(\pi_{n-1}-j+2)2.\]
	Notice $\pi''$ must avoid $\delta_k$ because it is a subpattern of $\pi$ which avoids $\delta_k.$ Since this permutation starts with its largest element and ends with 2, by Lemma~\ref{lem:231-1}, we see that the number of such permutations is $a_{n-j,k-2}(213).$
	
	Since, as we stated above, any permutation $\pi$ of this form must look like 
	\[ \pi = n\pi_2\pi_3\cdots\pi_j\pi_{j+1}\cdots\pi_{n-1}j\]
	where $\{\pi_2,\ldots,\pi_j\} = [2,j-1]\cup\{c_{j+2}\}$ and $\{\pi_{j+1},\ldots, \pi_{n-1}\} = [j+1,n-1] \setminus \{c_{j+2}\} \cup \{1\}$, any $\delta_k$ pattern must only consist of elements from either $[1,j-1]\cup\{c_{j+1},n\}$ or from $\{1\}\cup[j,n]$, this implies that doing the inverse process, building a permutation $\pi$ from $\pi'$ and $\pi''$ will result in a permutation that still avoids $\delta_k.$
\end{proof}


Given these results in Lemma~\ref{lem:as-k21-3}, we can define a generating function for $b_{n,k}(231)$ based on the generating function for $a_{n,k}(213).$

\begin{corollary}\label{cor:231} Lt $f_k(z;213)$ be the generating function for $a_{n,k}(213)$ and let $g_k(z;231)$ be the generating function for $b_{n,k}(231)$. Then for $k\geq 5,$
	\[ g_k(z;231) = \frac{z}{1-z}\left[1- zf_{k-2}(z;213) +(f_{k-2}(z;213))^2 \right]\]
\end{corollary}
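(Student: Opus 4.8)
The plan is to translate the recurrence of Lemma~\ref{lem:as-k21-2} directly into a relation among generating functions. Writing $g_k(z;231)=\sum_{n\geq 0}b_{n,k}(231)z^n$ and $f_{k-2}(z;213)=\sum_{m\geq 0}a_{m,k-2}(213)z^m$, I would multiply the identity
\[
b_{n,k}(231)=b_{n-1,k}(231)+\sum_{j=2}^{n-2}a_{j-1,k-2}(213)\,a_{n-j,k-2}(213)
\]
by $z^n$ and sum over $n\geq 3$. The first term on the right contributes $z\,g_k(z;231)$ after a shift of index (up to low-order correction terms that I will need to track via the base cases). The double sum is, after the substitutions $i=j-1$ and $\ell=n-j$, a Cauchy product of two copies of the sequence $(a_{m,k-2}(213))$ shifted by $z$-factors coming from the index offsets; this should collapse into $z\cdot\bigl(f_{k-2}(z;213)\bigr)^2$ or a closely related expression. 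So at the level of generating functions the recurrence reads, schematically, $g_k = z\,g_k + (\text{convolution term})$, which one solves for $g_k$ by dividing through by $(1-z)$.

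First I would pin down exactly which low-order terms ($n=0,1,2$) are \emph{not} governed by the stated recurrence and compute $b_{0,k},b_{1,k},b_{2,k}$ by hand (a permutation in $\B_n$ has $\pi_1=n$, and for $n=1,2$ these are forced, so these values are small constants). These determine the inhomogeneous ``seed'' that, together with the $z\,g_k$ term, produces the bracketed factor $\bigl[1-zf_{k-2}(z;213)+(f_{k-2}(z;213))^2\bigr]$ and the prefactor $z/(1-z)$. The cleanest route is: rearrange to $(1-z)g_k(z;231)=(\text{seed polynomial})+z(f_{k-2}(z;213))^2 + (\text{cross terms})$, verify that the seed and cross terms reorganize into $z\bigl[1-zf_{k-2}(z;213)\bigr]$, and then divide by $(1-z)$. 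I expect the $-zf_{k-2}(z;213)$ summand inside the bracket to arise precisely from reconciling the convolution's index range $j\in[2,n-2]$ (which omits the $j=1$ and $j=n-1$ boundary contributions) with the full Cauchy product $\bigl(\sum_m a_{m,k-2}z^m\bigr)^2$.

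The main obstacle will be bookkeeping at the boundaries: the recurrence is only valid for $n\geq 3$, the inner sum runs over the restricted range $2\le j\le n-2$ rather than over all indices, and the convolution I want to recognize is over the \emph{full} range. I therefore expect the real work to be isolating the correction terms — the missing $j=1$, $j=n-1$, and $j=n$ pieces of the idealized convolution, together with the discrepancy between $\sum_{n\geq 3}$ and $\sum_{n\geq 0}$ — and checking that they assemble into exactly the $z\cdot 1$ and $-z^2 f_{k-2}(z;213)$ contributions demanded by the bracket. Once that algebra is verified (for instance by comparing the first several coefficients against Figure~\ref{fig:table231} as a sanity check for, say, $k=5$), dividing by $1-z$ yields the claimed closed form and completes the proof.
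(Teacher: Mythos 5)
Your proposal is correct and matches the paper's (implicit) argument: the corollary is stated as a direct consequence of the recurrence $b_{n,k}(231)=b_{n-1,k}(231)+\sum_{j=2}^{n-2}a_{j-1,k-2}(213)a_{n-j,k-2}(213)$, translated into generating functions exactly as you describe, with the shift term giving $z\,g_k$, the restricted convolution giving $zf_{k-2}(f_{k-2}-z)$ (the $j=n-1$ boundary omission producing the $-z^2f_{k-2}$ piece, while $j=1$ and $j=n$ vanish since $a_{0,k-2}(213)=0$), and the seed $b_{1,k}(231)=1$ supplying the $z\cdot 1$ term before dividing by $1-z$. The bookkeeping you flag does close up exactly as you expect, so the plan goes through without obstruction.
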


Using this Corollary along with Lemma~\ref{lem:as-k21-3}, we can now find the generating function for $a_{n,k}(231)$.

\begin{theorem} \label{thm:231} Let $f_k(z;231)$ be the generating function for $a_{n,k}(231).$ Then for $k \geq 5$, $f_k(z;231)$ can be written in terms of the generating functions for $a_{n,k}(213)$:
	\[f_k(z;231) = \frac{f_{k-1}(z;213)\left([f_{k-2}(z;213)]^2-f_{k-2}(z;213)+1\right)}{1-z}. \]
	We also have \[f_4(z;231)=\frac{z (z^3 + z^2 - 2 z + 1)}{(1-z) (1-2z)},\]
	\[f_3(z;231)=\frac{z}{(1-z)^2}\]
	and $f_2(z;231)=f_1(z;231)=z$.
\end{theorem}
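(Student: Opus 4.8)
The plan is to turn the recurrence of Lemma~\ref{lem:as-k21-3} into a generating-function identity and then substitute the closed forms provided by Corollary~\ref{cor:231} and Corollary~\ref{cor:213-k21}. Throughout I would write $P=f_{k-2}(z;213)$ and $Q=f_k(z;231)$, and let $g_k(z;231)$ denote the generating function for $b_{n,k}(231)$. The key running facts are that both $P$ and $Q$ have zero constant term and linear coefficient $1$, since $a_{0,\cdot}=0$ and $a_{1,\cdot}=1$ in both families.

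First I would rewrite the convolution in Lemma~\ref{lem:as-k21-3}. Setting $m=j-1$, the sum becomes $\sum_{m=1}^{n-2}a_{m,k-2}(213)\,a_{n-m,k}(231)$, which is the coefficient $[z^n](PQ)=\sum_{m=0}^{n}a_{m,k-2}(213)\,a_{n-m,k}(231)$ with exactly three terms removed: the $m=0$ and $m=n$ terms vanish because $a_{0,\cdot}=0$, while the $m=n-1$ term leaves $a_{n-1,k-2}(213)\,a_{1,k}(231)=a_{n-1,k-2}(213)$. Hence for $n\geq 3$ the recurrence reads $a_{n,k}(231)-b_{n,k}(231)=[z^n](PQ)-a_{n-1,k-2}(213)$.

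Next I would sum this over $n$, being careful that Lemma~\ref{lem:as-k21-3} holds only for $n\geq 3$. One checks directly that $a_{n,k}(231)=b_{n,k}(231)$ for $n\le 2$ (every cyclic permutation of length at most $2$ begins with its largest letter), so the low-order discrepancies contribute nothing. Accounting for the two boundary effects — the $n\le 2$ terms of $PQ$ contribute $z^2$ because $[z^2](PQ)=a_{1,k-2}(213)\,a_{1,k}(231)=1$, while $\sum_{n\geq 3}a_{n-1,k-2}(213)z^n=zP-z^2$ — the two $z^2$ contributions cancel and I obtain the clean identity $Q(1-P)=g_k(z;231)-zP$. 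Substituting $g_k(z;231)=\frac{z}{1-z}\bigl(1-zP+P^2\bigr)$ from Corollary~\ref{cor:231}, the numerator $g_k(z;231)-zP$ collapses (the $\pm z^2P$ terms cancel) to $\frac{z(P^2-P+1)}{1-z}$, and the continued-fraction relation $f_{k-1}(z;213)=z/(1-P)$ from Corollary~\ref{cor:213-k21} converts $\frac{z}{(1-z)(1-P)}\,(P^2-P+1)$ into the asserted formula for $k\geq 5$.

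For the base cases, the values for $k\le 2$ are settled by inspection (for $k=2$, the length-$1$ permutation yields the term $z$ while every longer cyclic permutation contains $21=\delta_2$). The cases $k=3$ and $k=4$ fall outside the hypotheses of the structural lemmas — indeed, substituting $k=4$ into the general formula produces $\frac{z(1-z+z^2)}{1-2z}$, which does not agree with the stated $f_4(z;231)$ — so I would establish $f_3$ and $f_4$ by directly enumerating the cyclic permutations avoiding $\delta_3$ (respectively $\delta_4$) in one-line notation and $231$ in cycle form and reading off the resulting rational generating function. The main obstacle is the bookkeeping in the third paragraph: one must correctly reconcile the convolution range $1\le m\le n-2$ with the full product $PQ$ and with the $n\geq 3$ restriction on the recurrence, and verify that every spurious low-degree term cancels; once that is done, the remaining simplification is routine algebra.
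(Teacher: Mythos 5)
Your derivation for the main case $k\geq 5$ is correct, and it is essentially the paper's own proof: combine the recurrence of Lemma~\ref{lem:as-k21-3} with Corollary~\ref{cor:231}, arriving at $f_k(z;231)=\frac{z\left([f_{k-2}(z;213)]^2-f_{k-2}(z;213)+1\right)}{(1-z)\left(1-f_{k-2}(z;213)\right)}$, and then use $f_{k-1}(z;213)=z/(1-f_{k-2}(z;213))$ from Corollary~\ref{cor:213-k21} to rewrite the denominator. Your bookkeeping — checking $a_{n,k}(231)=b_{n,k}(231)$ for $n\leq 2$, isolating the stray $m=n-1$ term of the convolution as $zP-z^2$, and verifying the $z^2$ and $z^2P$ cancellations — is in fact more explicit than the paper's one-line "combine and simplify," and it is all sound.

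The genuine gap is your treatment of $k=4$. You correctly observe that the general formula fails there (the lemmas require $k\geq 5$), but your plan to "directly enumerate the cyclic permutations \ldots and read off the resulting rational generating function" is not a proof: fitting a rational function to finitely many coefficients establishes nothing for all $n$, and the claimed identity $f_4(z;231)=\frac{z(z^3+z^2-2z+1)}{(1-z)(1-2z)}$ is an assertion about infinitely many coefficients. A separate structural argument is required, and the paper supplies one occupying roughly half its proof: first, $b_{n,4}(231)=n-2$, because a permutation in $\B_n(4321;231)$ with $\pi_n=j$ for $3\leq j\leq n-2$ forces a $4321$ pattern in one-line notation, so $\pi_n\in\{2,n-1\}$; second, any $\pi\in\A_n(4321;231)$ with $\pi_1=j<n$ must have cycle form $(1,j,j-1,\ldots,3,2,c_{j+1},\ldots,c_n)$ (otherwise the first ascent among $c_3,\ldots,c_{j+1}$ produces a $4321$ pattern), and deleting $[2,j]$ leaves an arbitrary element of $\A_{n-j+1}(4321;231)$. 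This yields the recurrence $a_{n,4}(231)=b_{n,4}(231)+\sum_{j=2}^{n-1}a_{n-j+1,4}(231)$, from which the stated $f_4$ follows. Without an argument of this kind, the $k=4$ clause of the theorem is unproven in your proposal; the same criticism applies, more mildly, to $k=3$, which the paper settles by citation to \cite[Theorem 4.16]{ABBGJ} rather than by "inspection."
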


\begin{proof} For $k\geq 5$, Corollary~\ref{cor:231} and Lemma~\ref{lem:as-k21-3} combine to give us:
	\[ f_k(z;231) = \frac{z([f_{k-2}(z;213)]^2-f_{k-2}(z;213)+1)}{(1-z)(1-f_{k-2}(z;213))}.\] Using Corollary~\ref{cor:213-k21}, we can simplify this result as seen in the theorem. The case for $k=3$ follows from \cite[Theorem 4.16]{ABBGJ} and the case when $k\leq 2$ is clear. 
	
	All that remains is the case when $k=4.$ Let us first notice that $b_{n,4}(231)=n-2$. Indeed, it is easy to see there are $b_{n-1,4}$ permutations that have $\pi_n=n-1$, and one permutation that has $\pi_n=2$. We need only show that there are no permutations with $\pi_n=j$ for $3\leq j\leq n-2.$ If there were such a permutation, we would have $\pi = (1,n,j,c_3,\ldots, c_{j+1}, c_{j+2},\ldots,c_n)$ with $c_i>j$ for $i\geq j+2$ and $c_i<j$ for $3\leq i\leq j+1$. Thus $\pi_1\pi_{c_{j+1}}\pi_j\pi_{c_n} = nc_{j+2}c_31$ is a 4321 pattern. Next, let us see that any permutation in $\A_{n}(4321;231)$ with $\pi_1=j<n$ must be of the form \[C(\pi) = (1,j,j-1,\ldots,3, 2,c_{j+1},\ldots,c_n)\] where the permutation obtained by removing the elements in $[2,j]$ is in $\A_{n-j+1}(4321;231).$ If this were not the case and $i$ was the first place $c_i<c_{i+1}$ for some $i<j+1$, then $jc_{i+1}c_i1$ would be a 4321 pattern in $\pi.$ Thus we know that \[a_{n,4}(231) = b_{n,4}(231)+\sum_{j=2}^{n-1}a_{n-j+1}(231),\] which together with the fact that $b_{n,4}(231)=n-2$ implies the given generating function for $k=4$.
\end{proof}


As an example, consider $k=5$. We have
\begin{align*} f_5(z;231) &= \frac{f_{4}(z;213)\left([f_{3}(z;213)]^2-f_{3}(z;213)+1\right)}{1-z}\\
	&= \frac{z(1-2z)}{(z^2-3z+1)(1-z)} - \frac{z^2}{1-2z}.
\end{align*}

\section{Avoidance in all cycle forms}\label{sec:allcycles}

In this section, we will consider when the one-line notation of a cyclic permutation $\pi$ avoids $\delta_k=k(k-1)\ldots 21$ and \emph{all} cycle forms (that is, all cyclic rotations of $C(\pi)$) avoid a given pattern $\tau.$
As noted in \cite{Callan,DELMSSS, V03}, the case where $\tau$ is of length 3 is trivial, and so here, we consider $\tau\in\S_4$. In particular, we consider the case $\tau\in\{1324,1342\}$. Up to symmetry, the only remaining case to consider would be $\tau=1234,$ which we leave as an open question.

In this section, we use $\A^\circ_n(\delta_k;\tau)$ to denote those cyclic permutations that avoid $\delta_k$ where all cyclic rotations of $C(\pi)$ avoids $\tau$, and we let $a_{n,k}^\circ(\tau)=|\A^\circ_n(\delta_k;\tau)|$.

\subsection{Enumerating $\A^\circ_n(\delta^k;1324)$ and $\A^\circ_n(\delta^k;1423)$}

In \cite{Callan, V03}, the authors find the the total number of cyclic permutations $\pi$ so that all rotations of $C(\pi)$ avoid 1342 is equal to $F_{2n-3}$, the $(2n-3)^{\text{rd}}$ Fibonacci number.

\begin{theorem}\label{thm:1324}
	For $n\geq 3$, $a^\circ_{n,3}(1324) = 2^{n-2}$ and for $k\geq 4$, $a^\circ_{n,k}(1324) = F_{2n-3}$.
\end{theorem}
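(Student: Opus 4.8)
The plan is to treat the two regimes of $k$ separately and exploit the circular-avoidance structure of the cycle form. Write $C(\pi)=(1,c_2,\ldots,c_n)$ and recall that every cyclic rotation of $C(\pi)$ must avoid $1324$. First I would reduce the circular condition to a cleaner combinatorial one on the cyclic sequence $1,c_2,\ldots,c_n$. Since $1324$ is a pattern of length $4$ whose first entry is its smallest, circular avoidance of $1324$ is a strong local restriction: intuitively, reading the cycle starting at \emph{any} point, one cannot find an ascent followed (later) by a descent landing above the ascent-bottom, i.e.\ the ``$324$'' shape sitting above some earlier small value. I would first catalog exactly which cyclic words on $[n]$ avoid $1324$ in \emph{all} rotations; by the results cited from \cite{Callan,V03} the total count (over all cyclic permutations, without the one-line constraint) is $F_{2n-3}$, so the $k\ge 4$ claim will amount to showing that the extra condition ``$\pi$ avoids $\delta_k$ in one-line notation'' is automatically satisfied once $k\ge 4$.

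For the $k\ge 4$ case, the key step is therefore to prove that \emph{any} cyclic permutation all of whose cycle rotations avoid $1324$ already avoids $\delta_4=4321$ in one-line notation. If that holds, then for every $k\ge 4$ the one-line condition is vacuous, so $a^\circ_{n,k}(1324)$ equals the full count $F_{2n-3}$ from \cite{Callan,V03}, uniformly in $k$. To establish this implication I would argue contrapositively: suppose $\pi$ contains a $4321$ pattern, say positions $p_1<p_2<p_3<p_4$ with $\pi_{p_1}>\pi_{p_2}>\pi_{p_3}>\pi_{p_4}$, and translate this one-line descending chain into a forbidden $1324$ occurrence in some rotation of $C(\pi)$. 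The translation between one-line descents and cycle-form patterns is the crux, and this is the step I expect to be the main obstacle: one must track how a long decreasing run in $\pi_1\pi_2\cdots\pi_n$ forces a particular relative order among the $c_i$'s (using that $c_{i+1}=\pi(c_i)$), and then locate a rotation in which the four relevant entries read as $1324$. I would look for the structural normal form of all-rotation-$1324$-avoiders (presumably these are very rigid, built from a monotone backbone with limited insertions, consistent with the Fibonacci count) and read off both the Fibonacci enumeration and the absence of $4321$ directly from that normal form.

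For the base case $k=3$, the target is $a^\circ_{n,3}(1324)=2^{n-2}$, where now the one-line condition $\pi$ avoids $\delta_3=321$ is a genuine extra restriction. Here I would intersect the class of all-rotation $1324$-avoiders with the $321$-avoiding cyclic permutations. Using the normal form found above, imposing $321$-avoidance on the one-line notation should collapse the Fibonacci-many objects to exactly $2^{n-2}$; concretely I expect a clean bijection with binary strings of length $n-2$, e.g.\ a sequence of independent binary choices made while building the cycle one element at a time, each choice corresponding to whether the next element is inserted ``high'' or ``low'' subject to the $321$ constraint. I would verify the small cases against any table and then confirm the recurrence $a^\circ_{n,3}(1324)=2\,a^\circ_{n-1,3}(1324)$ matches the bijective description. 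The main risk in this part is ensuring the $321$-avoidance and the all-rotation $1324$-avoidance interact cleanly enough that the counting genuinely doubles at each step rather than producing a more complicated recurrence, so I would state and prove the structural normal form carefully before doing any counting.
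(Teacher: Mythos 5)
Your high-level plan coincides with the paper's strategy: for $k\geq 4$, reduce to showing that circular $1324$-avoidance forces the one-line notation to avoid $4321$ (so the one-line condition is vacuous and the count is the $F_{2n-3}$ of Callan and Vella), and for $k=3$, establish the doubling recurrence $a^\circ_{n,3}(1324)=2a^\circ_{n-1,3}(1324)$. However, the proposal stops exactly where the proof must begin: every substantive step is something you \emph{would look for} or \emph{expect}, and the one concrete plan you offer for the key implication --- contrapositively converting a $4321$ occurrence in one-line notation into a $1324$ occurrence in some rotation of $C(\pi)$ --- is not carried out and is in fact the hard direction; translating one-line patterns into cycle-form patterns directly is precisely the difficulty that makes this subject nontrivial, and the paper never attempts it. The missing content is the structural normal form you hypothesize but do not derive, and it is short: writing $C(\pi)=(1,c_2,\ldots,c_{r-1},2,c_{r+1},\ldots,c_n)$, if some entry $c_i$ before the $2$ were smaller than some entry $c_j$ after it, then $1,c_i,2,c_j$ would be a $1324$ pattern in the rotation starting at $1$; hence the entries before $2$ are exactly $[n-r+3,n]$ and those after are $[3,n-r+2]$. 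Moreover, if $\pi_1\neq 2$, then a descent $c_a>c_b$ among the entries after $2$ would make $2,c_a,c_b,c_2$ a $1324$ pattern in the rotation starting at $2$, so those entries must increase.

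With that lemma, both claims follow the way the paper argues, and your contrapositive translation is never needed: every $\pi\in\A^\circ_n(\delta_k;1324)$ is built recursively either by prepending $2$ to the one-line notation (the case $\pi_1=2$) or by inserting the block $34\cdots j1$ into positions $2,\ldots,j$ (the case where the cycle ends $2,3,\ldots,j$), and neither operation can create a $4321$, which gives the $k\geq 4$ statement; for $k=3$, avoidance of $321$ forces $\pi_1=2$ or $\pi_2=1$ (the $2$ sits either immediately after the $1$ in the cycle or at its very end), which is exactly the binary choice realizing your expected doubling, with base case $a^\circ_{2,3}(1324)=1$. So your outline points in the right direction and correctly identifies that the Fibonacci count should be cited rather than re-derived, but as written it defers the only lemma that requires proof; it is a plan for a proof rather than a proof.
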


\begin{proof}
	Let us first notice that if $\pi\in\A^\circ_n(\delta_k;1324)$
	then we must have $C(\pi) = (1, c_2, \ldots, c_{r-1}, 2, c_{r+1}, \ldots, c_n)$ with $\{c_2,\ldots,c_{r-1}\}=[n-r+3,n]$ and $\{c_{r+1},\ldots,c_n\}=[3,n-r+2].$ Furthermore, if $\pi_1\neq 2$, we must have that the elements after 2 appear in increasing order since otherwise we would have a 1324 pattern in the cycle form that begins with 2, with $2$ acting as the 1 in the pattern and $c_2$ acting as the 4 in the pattern.
	
	Now, if $k=3$ we claim that we must have that $\pi_1=2$ or $\pi_2=1$. If not, then we have $C(\pi) = (1, c_2, \ldots, c_{r-1}, 2, 3, \ldots, n-r+2)$ for some $r$. Then, in $\pi$, we have the 321 pattern $\pi_1\pi_2\pi_{n-r+2}=\pi_131$ with $\pi_1>2$. It is also straightforward to see that for any permutation $\pi \in \A^\circ_n(321;1324)$ with $C(\pi) = (1,2,c_3, \ldots, c_n)$, the permutation $\pi'$ with $C(\pi') = (1,c_3-1, \ldots, c_n-1)$ is in $\A^\circ_{n-1}(321; 1324)$ and in fact any permutation in $\A^\circ_{n-1}(321; 1324)$ can be obtained this way. Similarly if $\pi$ satisfies $C(\pi) = (1,c_2, \ldots, c_{n-1},2)$, then the permutation $\pi'$ with $C(\pi') = (1,c_2-1, \ldots, c_{n-1}-1)$ is in $\A^\circ_{n-1}(321, 1324)$. Together with the fact that $a_2^\circ(321,1324)=1$, the result that $a^\circ_{n,3}(1324) = 2^{n-2}$ follows. 
	
	For the second part of the theorem, it is enough to show that if all cyclic rotations of $C(\pi)$ avoid the pattern $1324$, then $\pi$ avoids 4321. As noted above, all permutations in $\A^\circ_{n}(\delta_k;1324)$ must either have the property that $\pi_1=2$ or $C(\pi) = (1,c_2,\ldots, c_{n+1-k},2,3,\ldots,k)$ for some $k>2$ where the permutation obtained by deleting $[2,k]$ from $C(\pi)$ (equivalently, deleting $34\ldots k1$ from positions $\{2,\ldots,k\}$ of the one-line notation) is also in $\A^\circ_{n-k+1}(\delta_k;1324)$. Additionally, if $\pi_1=2,$ deleting 2 from the cycle notation (equivalently, deleting 2 from the front of one-line notation), leaves you with a permutation in $\A^\circ_{n-1}(\delta_k;1324)$. Since all the permutations in $\A^\circ_{n}(\delta_k;1324)$ are built recursively by adding 2 to the front of the one-line notation or $34\ldots k1$ after the first position of the one-line notation, it is impossible to introduce a new 4321 pattern to the one-line notation, and so all permutations whose cycle forms avoid 1324 must avoids 4321 in its one-line notation.
\end{proof}

Finally, by considering the reverse of 1342, which is equivalent to 1423 under cyclic rotation, we have the following corollary.

\begin{corollary}
	For $n\geq 3$, $a^\circ_{n,3}(1423) = 2^{n-2}$. For $k\geq 4$, $a^\circ_{n,k}(1423) = F_{2n-3}$. 
\end{corollary}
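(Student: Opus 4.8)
The plan is to reduce the corollary to Theorem~\ref{thm:1324} via a symmetry argument, rather than re-running the recursive construction from scratch. The key observation is already flagged in the corollary's preamble: the reverse of $1342$ is $2431$, but under cyclic rotation of the cycle form this is equivalent to $1423$. So I would first make precise the statement that ``$\tau$ avoidance in all cyclic rotations of $C(\pi)$'' is invariant under both reversal of $\tau$ and cyclic rotation of $\tau$, since the collection of all rotations of $C(\pi)$ is itself a set closed under rotation. Concretely, I would argue that a permutation $\pi$ has all rotations of $C(\pi)$ avoiding $1423$ if and only if some naturally associated permutation $\widehat{\pi}$ has all rotations of its cycle form avoiding $1342$, where $\widehat{\pi}$ is obtained from $\pi$ by an involution that reverses the cycle word (equivalently, replacing $\pi$ by a suitable inverse/reverse operation on the cyclic sequence $(1,c_2,\dots,c_n)$).

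The main technical content is checking that this symmetry operation sending $1423$-in-all-rotations to $1342$-in-all-rotations simultaneously preserves the one-line condition of avoiding $\delta_k=k(k-1)\cdots 21$. I would leverage that $\delta_k$ is self-reverse-complementary (as used repeatedly in the $231$ section), so the operation I apply to the cycle form induces an operation on the one-line notation that maps $\delta_k$-avoiders to $\delta_k$-avoiders. The cleanest route is to identify the operation explicitly: reversing the circular word $C(\pi)$ corresponds to passing to $\pi^{-1}$, and since $\delta_k^{-1}=\delta_k$, avoidance of $\delta_k$ in one-line notation is preserved under inversion. Once I verify that the set of all cyclic rotations of $C(\pi^{-1})$ consists exactly of the reverses of the cyclic rotations of $C(\pi)$, the pattern-avoidance correspondence $\tau \leftrightarrow \tau^r$ across all rotations follows formally.

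With this bijection in hand, I would conclude that $a^\circ_{n,k}(1423) = a^\circ_{n,k}(1342)$ for all $n$ and $k$. Then I invoke the result cited at the start of Section~\ref{sec:allcycles} (from \cite{Callan,V03}) that the total number of cyclic permutations all of whose cycle rotations avoid $1342$ is $F_{2n-3}$, together with the observation inside the proof of Theorem~\ref{thm:1324} that for $k\geq 4$ every such permutation automatically avoids $\delta_k$ in one-line notation. This immediately gives $a^\circ_{n,k}(1423)=F_{2n-3}$ for $k\geq 4$. For the $k=3$ case, I would either check directly that the same $2^{n-2}$ count transfers under the symmetry, or mimic the short recursive argument in the $k=3$ part of Theorem~\ref{thm:1324}, noting that the $\delta_3=321$ restriction interacts with the $1423$ condition in the mirror-image way.

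The step I expect to be the main obstacle is pinning down the precise symmetry operation and verifying \emph{both} that it carries ``all rotations avoid $1423$'' to ``all rotations avoid $1342$'' \emph{and} that it fixes the class of $\delta_k$-avoiding cyclic permutations. The subtlety is that reversal of $1342$ gives $2431$, not $1423$, so the equivalence genuinely requires using cyclic rotation of the pattern (which is legitimate only because we are quantifying over all rotations of $C(\pi)$); I would need to state carefully that for a circularly-closed family of words, forbidding a pattern $\tau$ in every member is the same as forbidding any chosen cyclic rotation of $\tau$. Getting this bookkeeping exactly right — matching the reversal on the word level to inversion on $\pi$, and confirming the rotation-of-pattern identity $1423 \equiv \mathrm{rot}(1342^{\,r})$ — is where the care is needed, though the underlying idea is short.
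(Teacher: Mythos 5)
Your overall strategy is the right one, and it is in fact exactly the intended proof: inversion reverses the cycle word (the circular word of $C(\pi^{-1})$ is the reversal of that of $C(\pi)$), $\delta_k^{-1}=\delta_k$, and for a family of words closed under rotation, forbidding $\tau$ in every member depends only on the rotation class of $\tau$. But the specific identity you rest the proof on is false, and it is precisely the step you yourself flagged as the crux needing verification: $1423$ is \emph{not} a cyclic rotation of $1342^{r}$. Indeed $1342^{r}=2431$, whose rotation class is $\{2431,4312,3124,1243\}$, and $1423$ does not appear there. The correct identity is that $1423$ is rotation-equivalent to $1324^{r}$: we have $1324^{r}=4231$, and the rotation class of $1423$ is $\{1423,4231,2314,3142\}$. (The sentence in the paper immediately preceding the corollary contains the same typo, ``reverse of 1342''; it should read ``reverse of 1324,'' which is why the corollary sits in the $1324$ subsection and carries the same numbers as Theorem~\ref{thm:1324}.)

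This is not a cosmetic slip, because your reduction concludes $a^\circ_{n,k}(1423)=a^\circ_{n,k}(1342)$, which contradicts the paper itself: by the lemma preceding Theorem~\ref{thm:1342}, $a^\circ_{n,3}(1342)=n-1\neq 2^{n-2}$, and for $k\geq 4$ the $1342$ counts depend on $k$ and tend to the total $2^{n-1}-n+1$ once the $\delta_k$ condition is vacuous --- not $F_{2n-3}$. You land on the stated answer only because of a second compensating error: you quote the mis-typed sentence at the start of the $1324$ subsection attributing the count $F_{2n-3}$ to $1342$-avoidance (that count belongs to $1324$; the $1342$ total is $2^{n-1}-n+1$, as the paper's own Section~\ref{sec:allcycles} later states), and you transfer the observation ``all rotations avoid $\tau$ implies $\pi$ avoids $4321$'' from $1324$, where Theorem~\ref{thm:1324} proves it, to $1342$, where it is false (the $k$-dependence of $f^\circ_k(z;1342)$ shows not every $1342$-circular-avoider avoids $4321$). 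Two wrong citations cancel numerically, but the argument is invalid as written. The fix is immediate: run your symmetry argument with $1324$ in place of $1342$ throughout --- all rotations of $C(\pi)$ avoid $1423$ iff all rotations avoid $4231$, iff all rotations of $C(\pi^{-1})$ avoid $4231^{r}=1324$, and $\pi\mapsto\pi^{-1}$ preserves cyclicity and $\delta_k$-avoidance --- giving $a^\circ_{n,k}(1423)=a^\circ_{n,k}(1324)$, whereupon Theorem~\ref{thm:1324} yields both claims.
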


\subsection{Enumerating $\A^\circ_n(\delta^k;1342)$}

In this final section, we consider the case where $\tau=1342.$
In \cite{Callan, V03}, the authors find the the total number of cyclic permutations $\pi$ so that all rotations of $C(\pi)$ avoid 1342 is equal to $2^{n-1}-n+1$.

\begin{theorem}\label{thm:1342}
	If $f^\circ_k(z;1342) = \sum_{n\geq 0}a_{n,k}^\circ(1342)z^n$, then $f_1^\circ(1342)=f_2^\circ(1342) = 1+z$, and for $k\geq 3,$
	\[f^\circ_k(z;1342) = \frac{1-3z+2z^2+z^3}{(1-z)^2(1-2z)} - \frac{2z^{k+1}}{(1-z)^{k-1}(1-2z)}.\]
\end{theorem}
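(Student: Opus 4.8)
The plan is to separate the restriction coming from $\delta_k$ from the circular restriction on $C(\pi)$. Since $\pi$ avoids $\delta_k$ exactly when the longest decreasing subsequence of its one-line notation has length at most $k-1$, write $E_{n,\ell}$ for the number of $\pi$ such that every cyclic rotation of $C(\pi)$ avoids $1342$ and the one-line notation of $\pi$ has longest decreasing subsequence of length exactly $\ell$. Then $a^\circ_{n,k}(1342)=\sum_{\ell\le k-1}E_{n,\ell}$, and since $\sum_n\left(\sum_\ell E_{n,\ell}\right)z^n$ is the generating function for the total count $2^{n-1}-n+1$ established in \cite{Callan,V03}, a partial-fraction check shows this total equals the first term $\frac{1-3z+2z^2+z^3}{(1-z)^2(1-2z)}$. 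Thus the theorem is equivalent to the single combinatorial identity
\[E_{n,\ell}=2\binom{n-2}{\ell-1}\qquad(\ell\ge 3),\]
together with the base cases $k\le 2$. I would prove this identity and then assemble the generating function by a telescoping sum.

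For the structural input I would first prove that all cyclic rotations of $(1,c_2,\dots,c_n)$ avoid $1342$ if and only if, for every value $m$, the subword consisting of the entries exceeding $m$, read clockwise starting immediately after $m$, avoids $231$. This follows from the observation that a cyclic word contains $1342$ in some rotation precisely when some four entries, listed clockwise from their minimum, form $1342$; taking that minimum to be $m$ and noting that $342$ is order-isomorphic to $231$ gives the stated criterion. The case $m=1$ says $c_2\cdots c_n$ avoids $231$, and decomposing according to the position of the entry $2$ in the cycle is the key move: if $2$ sits immediately after $1$, deleting it and relabeling is a bijection onto the circular-$1342$-avoiders of $[n-1]$; if $2$ sits in position $j\ge 3$, the $m=1$ criterion forces the block $c_2>c_3>\cdots>c_{j-1}$ preceding $2$ to be decreasing, while the $m=2$ criterion and its successors pin down the block following $2$.

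The heart of the argument is to track the longest decreasing subsequence through this decomposition. When $2$ is immediately after $1$, one checks that in one-line notation $\pi$ is obtained from the smaller permutation $\pi'$ by prepending the value $2$ and relabeling the remaining entries order-isomorphically; since the only entry below the prepended $2$ is $1$, the longest decreasing subsequence is unchanged, so this case contributes $E_{n-1,\ell}$. It remains to count the permutations with $2$ in position $j\ge 3$ by their longest decreasing subsequence; writing $R_{n,\ell}$ for this count, I expect to prove $R_{n,\ell}=2\binom{n-3}{\ell-2}$ for $\ell\ge 3$, whence $E_{n,\ell}=E_{n-1,\ell}+R_{n,\ell}$ and Pascal's rule give $E_{n,\ell}=2\binom{n-2}{\ell-1}$ by induction on $n$. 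Counting $R_{n,\ell}$ is the main obstacle: the decreasing block preceding $2$ in the cycle does \emph{not} map to a decreasing subsequence of the one-line notation (for instance $C(\pi)=(1,n,n-1,\dots,2)$ gives the nearly increasing $\pi=n12\cdots(n-1)$, whose longest decreasing subsequence has length only $2$), so one must analyze how the constrained block following $2$ interacts with the preceding block at the level of the one-line notation, isolating the factor $2$ (two structural families) and the binomial (a choice of $\ell-2$ positions among $n-3$).

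Finally, with the identity in hand the generating function assembles cleanly. For $\ell\ge 3$ one has $\sum_n 2\binom{n-2}{\ell-1}z^n=\frac{2z^{\ell+1}}{(1-z)^{\ell}}$, so for $k\ge 3$
\[f^\circ_k(z;1342)=\frac{1-3z+2z^2+z^3}{(1-z)^2(1-2z)}-\sum_{\ell\ge k}\frac{2z^{\ell+1}}{(1-z)^{\ell}},\]
and summing the geometric series $\sum_{\ell\ge k}\frac{2z^{\ell+1}}{(1-z)^\ell}=\frac{2z^{k+1}}{(1-z)^{k}}\cdot\frac{1-z}{1-2z}=\frac{2z^{k+1}}{(1-z)^{k-1}(1-2z)}$ yields the claimed formula. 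The base cases $f_1^\circ=f_2^\circ=1+z$ are immediate, since avoiding $\delta_2=21$ forces the identity, which is cyclic only for $n\le 1$.
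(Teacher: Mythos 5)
Your reduction framework is sound, and several of its ingredients check out: the first summand $\frac{1-3z+2z^2+z^3}{(1-z)^2(1-2z)}$ is indeed the generating function for the total count $2^{n-1}-n+1$ of Callan and Vella; your criterion that all rotations of $C(\pi)$ avoid $1342$ if and only if, for each value $m$, the entries exceeding $m$ read clockwise from $m$ avoid $231$ is correct; the deletion of $2$ when it follows $1$ is a valid $\ell$-preserving bijection; and the claimed identity $E_{n,\ell}=2\binom{n-2}{\ell-1}$ for $\ell\geq 3$ is consistent with the theorem (it is in fact equivalent to it, and holds in small cases, e.g.\ $E_{5,3}=6$, $E_{5,4}=2$). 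The generating-function assembly at the end is also correct arithmetic.

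The problem is that the proposal never proves the one statement that carries all of the enumerative content: the claim $R_{n,\ell}=2\binom{n-3}{\ell-2}$ for permutations whose cycle form has $2$ in position $j\geq 3$. You say yourself that this is ``the main obstacle'' and that you ``expect to prove'' it, offering only the hope that a factor of $2$ and a binomial coefficient will emerge from two structural families and a choice of positions. But this is precisely where the interaction between the circular $1342$ condition on $C(\pi)$ and the decreasing-subsequence statistic of the one-line notation must be analyzed --- and, as your own example $C(\pi)=(1,n,n-1,\dots,2)$ shows, that interaction is not transparent, since blocks of the cycle do not map to monotone blocks of the one-line form. So what you have is a correct (and genuinely different) reduction of the theorem to an unproven combinatorial identity, not a proof. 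For comparison, the paper's proof does exactly this missing work: it introduces the auxiliary class $\B_n^\circ(\delta_k;1342)$ of permutations with $\pi_1=n$, shows by a case analysis on $\pi_n\in\{2,n-1\}$ (Lemma~\ref{lem:1342-4}) that $b^\circ_{n,k}=b^\circ_{n-1,k}+b^\circ_{n-1,k-1}$ --- the Pascal-type recurrence whose solution produces the binomial structure you are positing --- and feeds this into the decomposition of $\A^\circ_n(\delta_k;1342)$ by the value of $\pi_1$ (Lemma~\ref{lem:1342-3}). Completing your outline would require an argument of essentially that same depth, so the gap is not a routine verification but the heart of the theorem.
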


Let us first address the case when $k=3$, and then we will find a recurrence for the cases when $k\geq 4.$

\begin{lemma}
	For $n\geq 3,$ $a^\circ_{n,k}(1342) = n-1.$
\end{lemma}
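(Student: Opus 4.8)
The plan is to show that $\A^\circ_n(\delta_3;1342)$ consists of exactly the $n-1$ permutations whose cycle form is
\[ C(\pi) = (1,2,\ldots,j,n,n-1,\ldots,j+1), \qquad 1\le j\le n-1, \]
equivalently whose one-line notation is $\pi = 2\,3\,\cdots\,j\,n\,1\,(j+1)(j+2)\cdots(n-1)$. These are visibly distinct for distinct $j$, so once the characterization is established, counting them gives $a^\circ_{n,3}(1342)=n-1$ and the whole proof reduces to the two containments.

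First I would check each such $\pi$ lies in $\A^\circ_n(\delta_3;1342)$. For $321$-avoidance, note that in the cycle word the values increase along the arc $1,2,\ldots,j$ and decrease along the arc $n,n-1,\ldots,j+1$; reading $\pi$ at the positions lying on the first arc and at the positions lying on the second arc produces two increasing subsequences that together cover $[n]$, so $\pi$ avoids $321$. For the cyclic $1342$-avoidance, I would observe that $C(\pi)$ is a cyclically bitonic necklace whose increasing arc carries precisely the smallest values $\{1,\ldots,j\}$; a short case analysis on which of the two arcs each of four chosen elements lies confirms that no four of them can realize $1342$ in any rotation.

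The substance is the reverse containment. Given $\pi\in\A^\circ_n(\delta_3;1342)$, the rotation of $C(\pi)$ beginning at $1$ must avoid $1342$; since $1$ is the global minimum placed first, this is equivalent to $c_2c_3\cdots c_n$ avoiding $231$, and splitting that word at its maximum $n=c_r$ forces the entries before $n$ to be exactly the smaller middle values $\{2,\ldots,r-1\}$ and those after $n$ to be the larger middle values $\{r,\ldots,n-1\}$. It then remains to see that the first block is increasing and the second decreasing, and this is where the one-line $321$-avoidance enters. The device I would use is that a $321$-avoiding permutation has both its excedance values and its deficiency values increasing when read in position order (each proved by a one-line pigeonhole argument), together with the dictionary excedance $\leftrightarrow$ ascent and deficiency $\leftrightarrow$ descent of the cycle word. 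An ascent inside the after-block would be an excedance at a position exceeding $r-1$ whose value is below $n$, out of order with the excedance $c_{r-1}$ of value $n$, forcing a $321$; symmetrically a descent inside the before-block would be a deficiency at a position below $r$ with value at least $2$, out of order with the deficiency of value $1$ sitting at the larger position $r$, again forcing a $321$. Hence both blocks are monotone and $\pi$ has the claimed form.

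The main obstacle is precisely this last step: bridging the cycle-form condition (all rotations avoid $1342$) and the one-line condition ($321$-avoidance), which are statistics of two different linearizations of the same object. The excedance/deficiency monotonicity lemma is the key that makes the bridge clean, and I expect most of the care to go into stating it correctly (no fixed points occur, so weak versus strict excedances cause no trouble) and into the short casework for the forward cyclic-$1342$ check. The small cases $n=3$, where $231$- and $1342$-avoidance are vacuous, I would simply verify by hand, confirming the count $n-1=2$.
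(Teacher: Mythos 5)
Your proof is correct, and it reaches the paper's characterization (the $n-1$ ``bitonic'' cycles $(1,2,\ldots,j,n,n-1,\ldots,j+1)$) by a genuinely different route in the key step. Both arguments begin the same way: the rotation of $C(\pi)$ starting at $1$ must avoid $1342$, which forces the entries before $n$ in the cycle word to be exactly $[2,r-1]$ and those after $n$ to be $[r,n-1]$ (your reduction to $231$-avoidance of $c_2\cdots c_n$ is a clean way to say this that the paper leaves implicit). The difference is in how $321$-avoidance pins down the order within the two blocks. The paper translates to one-line notation: it observes that the positions $[1,k-1]$ carry the values $[2,k-1]\cup\{n\}$ and the positions $[k,n]$ carry $\{1\}\cup[k,n-1]$, then exhibits explicit $321$ patterns to force $\pi_{k-1}=n$, $\pi_k=1$, and finally monotonicity of the two one-line blocks, i.e.\ $\pi=23\cdots(k-1)\,n\,1\,k(k+1)\cdots(n-1)$. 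You instead stay entirely in cycle notation and invoke the characterization that a $321$-avoiding permutation has increasing excedance values and increasing deficiency values, together with the dictionary between ascents/descents of the cycle word and excedances/deficiencies; an ascent in the after-block or a descent in the before-block then contradicts monotonicity against the excedance of value $n$ at position $c_{r-1}$, respectively the deficiency of value $1$ at position $c_n$. The paper's argument is more self-contained and elementary (it never needs an auxiliary lemma), while yours is more conceptual, avoids switching between the two notations, and uses a reusable bridge lemma; the cost is that the excedance/deficiency lemma must be stated and proved. Two small points of care: the deficiency of value $1$ sits at position $c_n\geq r$, not necessarily at position $r$ itself (this does not matter, since all you need is that $c_n$ exceeds every before-block position, which already follows from the value split); and your forward containment defers the check that all rotations of the bitonic cycles avoid $1342$ to a ``short case analysis,'' which is the same level of assertion as the paper itself, and the claim is indeed true.
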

\begin{proof}
	First, note that if $C(\pi) = (1,c_2,\ldots, c_{k-1},n,c_{k+1},\ldots,c_n)$, then since $C(\pi)$ avoids 1342, we must have that $\{c_2,\ldots,c_{k-1}\}=[2,k-1]$ and $\{c_{k+1},\ldots,c_n\}=[k,n-1]$. That means $\pi = \pi_1\pi_2\ldots\pi_n$ satisfies that $\{\pi_1,\ldots,\pi_{k-1}\}=[2,k-1]\cup\{n\}$ and $\{\pi_k,\ldots,\pi_n\}=\{1\}\cup[k,n-1]$. Since $\pi$ avoids 321, we must have that $\pi_{k-1}=n$ and $\pi_k=1$ or else either $n\pi_{k-1}1$ or $n\pi_k1$ would be a 321 pattern. However, this implies that the elements appearing before $1$ and those appearing after $n$ must be increasing, so $\pi=23\ldots(k-1)n1k(k+1)\ldots(n-1).$ Since all cyclic rotations of the cycle form $C(\pi) = (1,2,\ldots,k-1,n,n-1,\ldots,k+1,k)$ avoid 1342, there are $n-1$ permutations in $\A^\circ_n(\delta_k;1342).$
\end{proof}

In order to avoid $\delta_k$ for $k\geq 4$, we will establish a recurrence. As before, $\B_n^\circ(\delta_k;1342)$ are those permutations in $\A_n^\circ(\delta_k;1342)$ with $\pi_1=n$ and $b^\circ_{n,k}(1342)=|\B_n^\circ(\delta_k;1342)|$.

\begin{lemma}\label{lem:1342-2}
	For $n\geq 3$ and $k\geq 4$, the number of permutations in $\A^\circ_n(\delta_k;1342)$ with $\pi_1=n-1$ and $\pi_n=1$ is equal to $b^\circ_{n-1,k-1}(1342)$.
\end{lemma}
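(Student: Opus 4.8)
The plan is to establish a bijection between the permutations in $\A^\circ_n(\delta_k;1342)$ with $\pi_1=n-1$ and $\pi_n=1$ and the permutations in $\B^\circ_{n-1}(\delta_{k-1};1342)$, which are those permutations $\rho\in\A^\circ_{n-1}(\delta_{k-1};1342)$ with $\rho_1=n-1$. First I would write the cycle form of such a $\pi$ in standard form. Since $\pi_1=n-1$ and $\pi_n=1$, the cycle must begin $C(\pi)=(1,n-1,\ldots)$ and the element mapping to $1$ is $n$, so $n$ appears at the end: $C(\pi)=(1,n-1,c_3,\ldots,c_{n-1},n)$. The goal is to strip off the two extreme values $n-1$ (in position $2$) and $n$ (at the end), producing a cyclic permutation on $n-1$ elements whose first entry is the largest.

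The natural candidate is $\pi'$ defined by deleting $n$ from the cycle and relabeling: since $n$ was the last entry of the cycle and pointed back to $1$, removing it makes $c_{n-1}$ point to $1$, and $1$ still points to $n-1$; relabeling so that $n-1$ becomes the largest element of $[n-1]$ gives $C(\pi')=(1,n-1,c_3,\ldots,c_{n-1})$ on the ground set $[n-1]$, so $\pi'_1=n-1$ and hence $\pi'\in\B^\circ_{n-1}(\delta_{k-1};1342)$ provided the avoidance conditions transfer. The two things to verify are: (i) every cyclic rotation of $C(\pi')$ avoids $1342$, and (ii) $\pi'$ avoids $\delta_{k-1}$ in one-line notation, with the pattern-length dropping from $k$ to $k-1$ being the point that forces deletion of exactly one extremal element. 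For (i) I would argue that deleting a single value from all rotations can never create a $1342$ occurrence, so avoidance is preserved in one direction; the reverse (that reinserting $n$ at the end of the cycle, i.e.\ prepending $n\mapsto 1$, never creates a new circular $1342$) is where I expect to use that $n$ is the global maximum sitting at the very end of the standard cycle, so in any rotation it can only play the role of the ``$4$'' and the forced structure $\pi_1=n-1,\pi_n=1$ constrains where it can appear.

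For (ii), the key observation is that in one-line notation $\pi$ begins with $n-1$ and ends with $1$, and these two entries, together with $n$, are exactly the ingredients of the longest decreasing subsequences. I would show that any occurrence of $\delta_k$ in $\pi$ must use $n$ (the top of the decreasing run) and must use $1$ (the bottom), so that deleting $n$ and adjusting lowers the maximal decreasing-subsequence length by exactly one; equivalently, $\pi$ avoids $\delta_k$ if and only if $\pi'$ avoids $\delta_{k-1}$. This is analogous to the argument in the proof of Lemma~\ref{lem:n2s-k21}, where removing the forced extremal pair $\{n,2\}$ dropped the pattern length by two; here the asymmetry $\pi_n=1$ versus the earlier $\pi_n=2$ should mean only one unit is lost, which is consistent with the index shift $k\mapsto k-1$ in the statement.

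The main obstacle I anticipate is verifying the circular $1342$-avoidance of $\pi'$ in both directions cleanly, since one must reason about \emph{all} rotations simultaneously rather than just the standard cycle form; in particular, showing that reinserting the maximum $n$ at the end cannot manufacture a circular $1342$ requires pinning down that, because of the avoidance already present and the positions forced by $\pi_1=n-1,\pi_n=1$, the new maximum can only serve as the final ascent ``$42$'' partner and never completes a forbidden pattern. Once that invertibility is secured, the equality of cardinalities $a^\circ$-count $=b^\circ_{n-1,k-1}(1342)$ follows immediately, so I would structure the write-up as: set up the forced cycle form, define the deletion map $\pi\mapsto\pi'$, prove it lands in $\B^\circ_{n-1}(\delta_{k-1};1342)$ (checking (i) and (ii)), exhibit the inverse by reinsertion, and conclude the counts match.
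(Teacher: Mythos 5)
Your construction is exactly the paper's: write $C(\pi)=(1,n-1,c_3,\ldots,c_{n-1},n)$, delete $n$ from the cycle to obtain $\pi'$ with $\pi'_1=n-1$, and show this is a bijection onto $\B^\circ_{n-1}(\delta_{k-1};1342)$. The genuine gap is in your step (ii). The claim that ``any occurrence of $\delta_k$ in $\pi$ must use $n$'' is false; in fact the truth is the opposite, and the opposite statement is what the argument needs. Since $n-1$ occupies position $1$, any occurrence of $\delta_k$ that uses $n$ (necessarily as its largest entry, at some position $j\geq 2$) can have $n$ replaced by the entry $n-1$ at position $1$; so whenever $\pi$ contains $\delta_k$, it contains an occurrence avoiding $n$. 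Concretely, take $n=6$, $k=4$, and $\pi=534621$ with $C(\pi)=(1,5,2,3,4,6)$: here $\pi_1=5$, $\pi_6=1$, every cyclic rotation of $C(\pi)$ avoids $1342$, and the only occurrences of $4321$ in $\pi$ are $5,3,2,1$ and $5,4,2,1$ --- neither uses $6$. So the statement you plan to prove is unprovable, and the inference you hang on it (``deleting $n$ lowers the maximal decreasing length by exactly one'') is not established by your route. A second, related oversight: deleting $n$ from the cycle form does \emph{not} simply delete $n$ from the one-line notation; it also moves the value $1$ from position $n$ to position $j$ (the former position of $n$). Hence a decreasing sequence of $\pi$ ending in the terminal $1$ does not automatically survive in $\pi'$, which is exactly why a naive ``remove $n$ from the occurrence'' argument breaks.

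The paper's proof avoids both problems. For the direction ``$\pi$ contains $\delta_k$ $\Rightarrow$ $\pi'$ contains $\delta_{k-1}$,'' it takes an occurrence avoiding $n$ (by the substitution above) and keeps only its first $k-1$ entries: these have values at least $2$ and positions less than $n$ and different from $j$, so they are untouched by the deletion and form a $\delta_{k-1}$ in $\pi'$. For the converse direction --- which is what makes the map well defined, and which your plan never addresses --- the paper appends the terminal $1$ of $\pi$ to a $\delta_{k-1}$ occurrence of $\pi'$ to produce a $\delta_k$ in $\pi$. Your part (i) on circular $1342$-avoidance is fine in outline (deletion trivially preserves avoidance, and for insertion one checks that a $1342$ pattern using $n$ as the ``$4$'' in some rotation of $C(\pi)$ forces a $1342$ pattern in a rotation of $C(\pi')$ with $n-1$ playing the role of $n$), but you explicitly leave that transfer step unproved. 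As written, then, the proposal is the right bijection with the two pattern-transfer verifications either missing or resting on a false lemma.
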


\begin{proof}
	Given $\A^\circ_n(\delta_k;1342)$ with $\pi_1=n-1$, $\pi_n=1$, and $\pi_j=n$, let $\pi'\in \S_n$ is defined by letting $\pi'_j=1$ and $\pi'_i=\pi_i$ for $i\in[n-1]\setminus\{j\}$. This corresponds to deleting $n$ from the cycle form of $\pi$ to obtain $\pi'$. We claim that the permutation $\pi'\in\B_{n-1}^\circ(\delta_{k-1};1342)$ and that all such permutations in $B_{n-1}^\circ(\delta_{k-1};1342)$ can be obtained this way. 
	
	First notice that if there is a $\delta_{k-1}$ pattern in $\pi'$, that pattern together with the 1 at the end of $\pi$ would be a $\delta_k$ pattern in $\pi.$ Conversely, a $\delta_k$ pattern in $\pi$, then there is one that does not use $n$ as the $k$ in the $k\ldots 21$ pattern (since we can use $\pi_1=n-1$ instead). By taking the first $k-1$ elements of that pattern (which must not contain $\pi_n=1$) we would have a $\delta_{k-1}$ pattern in $\pi'.$  
	
	Finally, since adding or removing the $n$ at the end (which is cyclically adjacent to $1,n-1$), we cannot introduce a new $1342$ pattern to any cyclic rotation of the cycle forms of the permutations and so the result follows.
\end{proof}

\begin{lemma}\label{lem:1342-3}
	For $n\geq 5$ and $k\geq 4$, we have 
	\[
	a^\circ_{n,k}(1342) = a^\circ_{n-1,k}(1342) + b^\circ_{n,k}(1342)+ \sum_{r=3}^{n-1} b^\circ_{r,k-1}(1342) .
	\]
\end{lemma}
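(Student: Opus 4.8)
The plan is to partition $\A^\circ_n(\delta_k;1342)$ according to the value $\pi_1=c_2$, which for a cyclic permutation lies in $\{2,3,\ldots,n\}$, and to match the three summands on the right-hand side with the cases $\pi_1=2$, $\pi_1=n$, and $3\le\pi_1\le n-1$. The starting point is a structural observation about $C(\pi)=(1,c_2,\ldots,c_n)$: since the leading $1$ is the smallest entry, the rotation beginning at $1$ avoids $1342$ exactly when $c_2\cdots c_n$ avoids $231$ (a $231$ together with the leading $1$ is a $1342$, and conversely $1342$ contains $231$, so if the tail avoids $231$ it avoids $1342$ and no $1342$ can use the leading $1$). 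Hence if $\pi_1=j$ then $C(\pi)=(1,j,S,L)$, where $S$ is a $231$-avoiding permutation of $[2,j-1]$ and $L$ is a $231$-avoiding permutation of $[j+1,n]$, with every small entry preceding every large entry.

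First I would dispose of the two easy cases. When $j=n$ we are counting $\B^\circ_n(\delta_k;1342)$, contributing $b^\circ_{n,k}(1342)$. When $j=2$ the block $S$ is empty and $C(\pi)=(1,2,L)$; deleting the entry $2$ from the cycle and relabelling gives a bijection onto $\A^\circ_{n-1}(\delta_k;1342)$, contributing $a^\circ_{n-1,k}(1342)$. Here one checks that a leading $2$ in the one-line notation can begin a decreasing run of length at most $2$, and that $2$ is cyclically adjacent to $1$ in the cycle, so neither a $\delta_k$ nor a $1342$ is created or destroyed by the deletion. For the remaining range $3\le j\le n-1$ both blocks are nonempty, and the key claim is that $L$ is forced to be the decreasing word $n,(n-1),\ldots,(j+1)$. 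Indeed, the rotation of $C(\pi)$ beginning at the first entry of $S$ reads (schematically) $S,L,1,j$; if $L$ contained an ascent $\ell_b<\ell_c$, then choosing any $s\in S$ produces the subsequence $s,\ell_b,\ell_c,j$, which is a $1342$ pattern since $s<j<\ell_b<\ell_c$. Thus $L$ has no ascent and is determined.

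For fixed $j\in[3,n-1]$, deleting the forced tail $L$ from the cycle yields $\hat\pi$ with $C(\hat\pi)=(1,j,S)$, a cyclic permutation of $[j]$ with $\hat\pi_1=j$; I would prove that $\pi\mapsto\hat\pi$ is a bijection onto $\B^\circ_j(\delta_{k-1};1342)$, which contributes $b^\circ_{j,k-1}(1342)$ and, summed over $j$, gives the stated sum. Circular $1342$-avoidance transfers in the forward direction because deletion cannot create a pattern; for the reverse direction one must verify that reattaching the decreasing block of the largest values introduces no new $1342$ in any rotation. I would do this by a projection argument: any putative $1342$ using a large entry as its ``$4$'' forces its ``$3$'' and ``$2$'' to be non-large (the large block is decreasing, so it cannot supply an ascending pair), and replacing that large ``$4$'' by $j$ produces a $1342$ already present in the corresponding rotation of $C(\hat\pi)$, contradicting $\hat\pi\in\B^\circ_j$.

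The crux is the one-line statement that $\pi$ avoids $\delta_k$ if and only if $\hat\pi$ avoids $\delta_{k-1}$, equivalently that the longest decreasing subsequence of $\pi$ exceeds that of $\hat\pi$ by exactly one. The subtlety is that although the deleted tail $L$ has $n-j$ entries, in one-line notation it produces only the \emph{increasing} run in positions $j+2,\ldots,n$, while the value $n$ now occupies the $S$-position that previously held $1$ and the value $1$ is relocated to position $j+1$. Consequently the increasing run contributes at most one element to any decreasing subsequence, the relocated $1$ can be appended to any decreasing subsequence supported on the first $j$ positions, and a short case analysis (using that position $1$ carries the maximum $j$ of $\hat\pi$) shows the net change is exactly $+1$, so deleting the whole tail lowers the forbidden decreasing length by one rather than by $n-j$. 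Establishing this $+1$ relation cleanly is the main obstacle; it is the full-tail analogue of the single-element reduction carried out in Lemma~\ref{lem:1342-2} (the case $j=n-1$, where $\pi_n=1$ automatically and $L=\{n\}$), and combining it with the three case counts above yields the recurrence.
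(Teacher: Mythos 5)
Your case decomposition by $\pi_1$ (the cases $\pi_1=2$, $\pi_1=n$, $3\le\pi_1\le n-1$) and your structural claim for the middle case (that the block $L$ of large values is forced to be $n,n-1,\ldots,j+1$, via a $1342$ in the rotation beginning inside $S$) are exactly the paper's. The divergence is only in how the middle case is then counted: the paper deletes the values $j+1,\ldots,n-1$, observes that what remains is a permutation of length $j+1$ in $\A^\circ_{j+1}(\delta_k;1342)$ whose first entry is its second-largest value and whose last entry is $1$, and quotes Lemma~\ref{lem:1342-2}; you delete all of $L$ at once and try to prove directly that $\pi\mapsto\hat\pi$ is a bijection onto $\B^\circ_j(\delta_{k-1};1342)$. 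That is the same map (yours is the paper's reduction composed with the bijection of Lemma~\ref{lem:1342-2}), but you must then re-prove that lemma's content, and your proposed argument for it does not close.

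The gap is the ``$+1$'' statement you yourself flag as the main obstacle. The facts you marshal --- $S$ avoids $231$, $L$ becomes an increasing run in one-line notation, the relocated $1$ can be appended to any decreasing subsequence in the first $j$ positions, and position $1$ of $\hat\pi$ carries the maximum $j$ --- do \emph{not} imply that the longest decreasing subsequence drops by exactly one, so no case analysis built only from them can succeed. Concretely, take $j=5$, $n=7$, $S=(3,2,4)$, so $C(\hat\pi)=(1,5,3,2,4)$ with $\hat\pi=5\,4\,2\,1\,3$, and $C(\pi)=(1,5,3,2,4,7,6)$ with $\pi=5\,4\,2\,7\,3\,1\,6$. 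Every hypothesis you list holds, yet both $\hat\pi$ and $\pi$ have longest decreasing subsequence $4$ (namely $5\,4\,2\,1$ and $5\,4\,3\,1$): the occurrence of $\delta_4$ in $\hat\pi$ uses the value $1$, whose position in $\pi$ holds $n$, and it cannot be lifted to a $\delta_5$ in $\pi$. What excludes this configuration is the hypothesis your one-line argument never invokes: \emph{circular} $1342$-avoidance. A $213$ pattern $s_c,s_i,s_a$ inside $S$ produces the $1342$ pattern $s_i,s_a,j,s_c$ in the rotation of $C(\hat\pi)$ (and of $C(\pi)$) beginning at $s_i$ --- in the example, $(2,4,5,3)$ inside $2,4,1,5,3$ --- so $S$ is in fact forced to avoid $213$ as well as $231$. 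That in turn forces the value $2$ to appear after the value $1$ in $\hat\pi$, and only then can an occurrence of $\delta_{k-1}$ in $\hat\pi$ that uses the value $1$ be repaired: replace its $1$ by the later $2$, and append the entry $\pi_{j+1}=1$. So either supply this argument, or do what the paper does and reduce to Lemma~\ref{lem:1342-2}, which encapsulates exactly this difficulty.

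A smaller flaw: in your projection argument for circular avoidance, the parenthetical ``the large block is decreasing, so it cannot supply an ascending pair'' is false for rotations beginning inside $L$, where $L$ is split into two decreasing runs and ascending pairs across the split do occur. The conclusion that the ``3'' and ``4'' of a $1342$ cannot both be large still holds, but only because the ``1'' of such a pattern would then have to lie in the first run above its ``3''; and the ``replace the large $4$ by $j$'' step should be carried out in the rotation of $C(\hat\pi)$ beginning at the pattern's ``1'', where $j$ does sit between the ``3'' and the ``2''.
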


\begin{proof}
	Let $\pi\in\A_n^\circ(\delta_k;1342)$.
	Note that if $\pi_1=2$, one can delete the 2 from both the cycle from and one-line form of the permutation and get a new permutation in $\pi\in\A_{n-1}^\circ(\delta_k;1342)$ and that this process is invertible since that 2 cannot be part of a $\delta_k$ pattern in $\pi$ and cannot add a $1342$  pattern in any cyclic rotation of $C(\pi)$. Also, by definition there are $b^\circ_{n,k}(1342)$ permutations with $\pi_1=n.$ 
	
	Assume that $3\leq \pi_1 \leq n-1$, and let $r=\pi_1.$ Then we claim that the permutation must be of the form $(1,r,c_3,\ldots, c_r,n,n-1,\ldots, r+1).$ Indeed, since $\pi$ avoids $1342$, all elements after $n$ must be larger than elements before $n$. Furthermore, if there were some element $c_m<c_{m+1}$ with $m>r$, then $c_3c_mc_{m+1}r$ would be a 1342 pattern in a cyclic rotation of $C(\pi)$. Therefore, in one-line notation, $\pi$ is of the form $\pi=r\pi_2\ldots\pi_r1(r+1)(r+2)\ldots(n-1)$. Any $\delta_k$ pattern in $\pi$ does not include any element of the increasing segment $(r+1)(r+2)\ldots(n-1)$ since $n$ is the only element larger than those appearing before them. For this reason, the permutation obtained by deleting these elements (from both the one-line and cycle form of the permutation) still avoids is of the form  $(1,r,c_3,\ldots,c_r,r+1)$ and avoids $\delta_{k}$ and that all such cycles can be formed this way. Thus, by Lemma~\ref{lem:1342-2}, there are $b^\circ_{r,k-1}$ such permutations with $\pi_1=r$. 
\end{proof}

\begin{lemma}\label{lem:1342-4} For $n\geq 1$, we have $b^\circ_{n,3}(1342) = 1$. For $n\geq 4$ and $k\geq 4$, we have 
	\[
	b^\circ_{n,k}(1342) = b^\circ_{n-1,k}(1342) + b^\circ_{n-1,k-1}(1342).
	\]
	As a result we have the generating function $B_k(z;1342) = \sum_{n\geq 1} b^\circ_{n,k}(1342)z^n$
	\[
	B_k(z;1342) =\frac{z (1 - z - z^2)}{1 - 2 z}- \frac{z^{k+1}}{(1 - 2 z)(1 - z)^{k-2}}.
	\]
\end{lemma}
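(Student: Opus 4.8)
The plan is to dispatch the base case directly, prove the two-term recurrence by a case analysis on the last one-line entry $\pi_n$, and then read off the generating function by solving a first-order recurrence in $k$.

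For the base case, fix $k=3$ and take $\pi\in\B^\circ_n(\delta_3;1342)$, so $\pi_1=n$ and $\pi$ avoids $321$. Since $n$ is the largest entry and occupies the first position, any descent $\pi_i>\pi_j$ with $1<i<j$ would complete a $321$ pattern together with $\pi_1=n$; hence $\pi_2\pi_3\cdots\pi_n$ must be increasing, forcing $\pi=n12\cdots(n-1)$, whose cycle $(1,n,n-1,\dots,2)$ has all rotations avoiding $1342$. So $b^\circ_{n,3}(1342)=1$. For the recurrence I would write $C(\pi)=(1,n,c_3,\dots,c_n)$ and study $\pi_n=\pi(n)=c_3$. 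The structural heart is the claim $c_3\in\{2,n-1\}$: reading the rotation that begins at $1$ and using $1$ as the smallest role of a prospective $1342$ forces $(c_3,\dots,c_n)$ to avoid $231$, while the rotation that places $n$ in the ``$4$'' (maximal) role, wrapping past the adjacent pair $1,n$, forces $(c_3,\dots,c_n)$ to avoid $213$; a word avoiding both $213$ and $231$ must begin with its smallest or its largest letter, giving $c_3\in\{2,n-1\}$. This partitions $\B^\circ_n(\delta_k;1342)$ into two classes.

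In the first class $c_3=n-1$, so $\pi=n\,\pi_2\cdots\pi_{n-1}\,(n-1)$ with $\{\pi_2,\dots,\pi_{n-1}\}=\{2,\dots,n-2\}$. I would delete $n$ from $C(\pi)$ to obtain $\sigma$ with $C(\sigma)=(1,n-1,c_4,\dots,c_n)$ and $\sigma_1=n-1$. Because $n-1$ exceeds every middle entry, replacing the leading $n$ by $n-1$ and discarding the trailing $n-1$ leaves the longest decreasing subsequence unchanged, so $\sigma$ avoids $\delta_k$ exactly when $\pi$ does; inverting by reinserting $n$ immediately after $1$ shows this is a bijection onto $\B^\circ_{n-1}(\delta_k;1342)$, contributing $b^\circ_{n-1,k}(1342)$. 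In the second class $c_3=2$, equivalently $\pi_1=n$ and $\pi_n=2$, I would apply reverse-complement $\pi\mapsto\pi^{rc}$, so that $\pi^{rc}_1=n-1$, $\pi^{rc}_n=1$, and $\delta_k$-avoidance persists since $\delta_k^{rc}=\delta_k$. The cycle of $\pi^{rc}$ is the complement of $C(\pi)$, so its rotations avoid $1342$ iff those of $C(\pi)$ avoid the complement $4213$ of $1342$; the crucial observation is that $4213$ is a cyclic rotation of $1342$, so the two circular-avoidance conditions coincide and $\pi^{rc}\in\A^\circ_n(\delta_k;1342)$. Thus $\pi\mapsto\pi^{rc}$ is an involutive bijection onto $\{\rho\in\A^\circ_n(\delta_k;1342):\rho_1=n-1,\ \rho_n=1\}$, which by Lemma~\ref{lem:1342-2} has cardinality $b^\circ_{n-1,k-1}(1342)$. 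Adding the two classes yields $b^\circ_{n,k}(1342)=b^\circ_{n-1,k}(1342)+b^\circ_{n-1,k-1}(1342)$.

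For the generating function I would translate the recurrence (valid for $n\ge4$) together with the small values $b^\circ_{1,k}(1342)=b^\circ_{2,k}(1342)=b^\circ_{3,k}(1342)=1$ into the relation, for $k\ge4$,
\[
B_k(z;1342)=\frac{z(1-z-z^2)}{1-z}+\frac{z}{1-z}\,B_{k-1}(z;1342),
\]
with base case $B_3(z;1342)=z/(1-z)$. Setting $u=z/(1-z)$, this is a first-order linear recurrence whose solution is a geometric sum plus the term $u^{k-3}B_3$; simplifying, the two fractional pieces merge using the identity $-(1-z-z^2)(1-z)+(1-2z)=-z^3$, producing exactly the stated closed form for $B_k(z;1342)$.

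The main obstacle is the structural reduction establishing $c_3\in\{2,n-1\}$ (equivalently, that the block after $n$ avoids both $213$ and $231$) and the careful bookkeeping of the longest decreasing subsequence under the two maps — preservation in the first class and a shift by one, via Lemma~\ref{lem:1342-2}, in the second. The conceptual point that makes the second class painless, and which I would highlight, is that $4213$ is a cyclic rotation of $1342$; this is precisely what lets reverse-complement preserve the ``all rotations avoid $1342$'' condition and lets us invoke Lemma~\ref{lem:1342-2} verbatim rather than rebuild its deletion argument by hand.
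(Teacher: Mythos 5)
Your proof is correct and follows essentially the same route as the paper's: the same dichotomy $\pi_n\in\{2,n-1\}$, the same deletion bijection onto $\B^\circ_{n-1}(\delta_k;1342)$ when $\pi_n=n-1$, the same reverse-complement reduction to Lemma~\ref{lem:1342-2} when $\pi_n=2$, and the same functional equation for $B_k(z;1342)$. If anything, you supply details the paper leaves implicit --- the $213$/$231$ suffix argument for the dichotomy, the observation that $1342^c=4213$ is a cyclic rotation of $1342$ (which justifies why reverse-complement preserves circular avoidance), and the explicit solution of the first-order recurrence in $k$.
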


\begin{proof}
	We first claim that if $\pi\in\B_n^\circ(\delta_k;1342)$, then we must have $\pi_n\in\{2,n-1\}.$ If not, then we would have $C(\pi) =(1,n,j,c_4,\ldots,c_n)$ for some $3\leq j\leq n-2$. Notice that if $2$ appears after $n-1$ in $C(\pi)$, then $1j(n-1)2$ is a 1342 pattern, and if $2$ appears before $n-1$, then $2(n-1)nj$ is a 1342 pattern in a cyclic rotation of $C(\pi).$ Since this is a contradiction, we must have $\pi_n\in\{2,n-1\}.$ If $\pi_n=2$, then by complementing $C(\pi)$ (corresponding to the reverse complement of $\pi$), we have a permutation $\pi^{rc}\in\A_n^\circ(\delta_k;1342)$ with the property that $\pi_1=n-1$ and $\pi_n=1$. By Lemma~\ref{lem:1342-2}, there are $b_{n-1,k-1}^\circ(1342)$ such permutations. If $\pi_n=n-1$, we can delete $n-1$ from the cycle form (and equivalently from the one-line notation) to obtain a permutation in $\B_n^\circ(\delta_k;1342)$. Since this process is invertible, there are $b_{n-1,k}^\circ(1342)$ permutations with $\pi_n=n-1$. 
	
	Finally, since the generating function in the statement of the theorem satisfies \[B_k(z;1342) = zB_{k}(z;1342) + zB_{k-1}(z;1342) + z-z^2-z^3,\] the theorem follows. 
\end{proof}

\begin{proof}[Proof of Theorem~\ref{thm:1342}]
	The generating function $f_k^\circ(z;1342)$ in the statement of the theorem follows from the recurrence in Lemma~\ref{lem:1342-3} and the generating function in Lemma~\ref{lem:1342-4}.
\end{proof}


\section{Open Questions}\label{sec:open}

In Section~\ref{sec:standard}, we enumerate cyclic permutations that avoid the decreasing pattern $\delta_k=k(k-1)\ldots 21$ and whose cycle form $C(\pi)$ avoids a pattern $\tau$ of length 3, with the exception of $\tau=321,$ which remains an open question. Similarly, in Section~\ref{sec:allcycles}, we enumerate cyclic permutations that avoid the decreasing pattern $\delta_k=k(k-1)\ldots 21$ such that every cyclic rotation of its cycle form avoids a pattern $\tau$ of length 4, with the exception of $\tau=1432,$ which also remains open. 

There are many other interesting sequences that seem to appear when considering pattern-avoiding cyclic permutations where the cycle form(s) also avoids a pattern or set of patterns. For example, we conjecture that $|\A^\circ(4123;1324)|$ is the $(n-2)^{\text{nd}}$ Tetranacci number, that $|\A^\circ(2431;1324)|$ is the $(n-1)^{\text{st}}$ Pell number, and $|\A^\circ(4132;1324)|$ is the $(3n)^{\text{th}}$ Padovan number. Several other pairs of patterns (or sets of patterns) seem to give interesting sequences that appear in the Online Encyclopedia of Integer Sequences, which makes these open questions seem quite accessible to future researchers.

\subsection*{Acknowledgements}

The student authors on this paper, Ethan Borsh, Jensen Bridges, and Millie Jeske, were funded as part of an REU at the University of Texas at Tyler sponsored by the NSF Grant DMS-2149921.

\bibliographystyle{amsplain}


\end{document}